\newtheorem{theorem}{Theorem}[section]
\newaliascnt{conj}{theorem}
\newaliascnt{cor}{theorem}
\newaliascnt{lemma}{theorem}
\newaliascnt{fact}{theorem}
\newaliascnt{claim}{theorem}
\newaliascnt{prop}{theorem}
\newaliascnt{definition}{theorem}
\newtheorem{cor}[cor]{Corollary}
\newtheorem{definition}[definition]{Definition}
\theoremstyle{remark}
\newaliascnt{remark}{theorem}
\newtheorem{remark}[remark]{Remark}
\theoremstyle{remark}
\newaliascnt{exam}{theorem}
\def\sek~{\S{}}
\def\blfootnote{\gdef\@thefnmark{}\@footnotetext}
\newcommand{\std}{\operatorname{std}}
\newcommand{\ot}{\operatorname{ot}}
\newcommand{\tb}{\operatorname{tb}}
\newcommand{\const}{\operatorname{const}}
\newcommand{\trun}{\operatorname{trun}}
\newcommand{\In}{\operatorname{in}}
\newcommand{\Out}{\operatorname{out}}
\newcommand{\mangle}{\measuredangle}
\newcommand{\p}{\partial}
\newcommand{\CC}{\mathbb{C}}
\newcommand{\DD}{\mathbb{D}}
\newcommand{\RR}{\mathbb{R}}
\newcommand{\ZZ}{\mathbb{Z}}
\newcommand{\FF}{\mathcal{F}}
\newcommand{\SB}{\mathcal{SB}}
\newcommand{\bdn}{\mathbf{n}}
\newcommand{\bds}{\mathbf{s}}
\begin{document}

\title[on plastikstufe, bLob and overtwisted contact structures]{On plastikstufe, bordered Legendrian open book and overtwisted contact structures}
\author{Yang Huang}

\blfootnote{The author is supported by the Center of Excellence Grant ``Centre for Quantum Geometry of Moduli Spaces'' from the Danish National Research Foundation (DNRF95).}

\begin{abstract}
	In this paper we prove the presence of an embedded plastikstufe implies overtwistedness of the contact structure in any dimension. Moreover, we show in dimension 5 that the presence of an embedded bordered Legendrian open book (bLob) also implies overtwistedness. Applications of these criteria to certain (contact) fibered connected sums and open books are discussed.
\end{abstract}

\maketitle

\section{introduction} \label{sec:intro}
A contact structure $\xi$ is a hyperplane distribution in an odd-dimensional manifold $M$ which is maximally non-integrable. The questions of the existence and classification of contact structures lie in the center of contact topology. It turns out that among the set of all contact structures, there is a subset of ``flexible'' contact structures, known as {\em overtwisted} contact structures, which satisfy a form of $h$-principle in the sense of M. Gromov. In particular, the existence and classification of overtwisted contact structures can be dealt with using algebraic topology. 

In dimension 3, the overtwisted contact structures are introduced and classified by Y. Eliashberg in \cite{Eli89}. Recently this result is generalized to arbitrary dimensions by M. Borman, Y. Eliashberg and E. Murphy in \cite{BEM15}. Soon after the discovery of overtwistedness in higher dimensions, several useful criteria of overtwistedness are introduced by R. Casals, Murphy and F. Presas in \cite{CMP15}, where the overtwisted condition is connected with various commonly used notions in contact topology including open book decompositions, loose Legendrian submanifolds \cite{Mur12}, neighborhood size of overtwisted submanifolds \cite{NP2010}, contact surgery \cite{CE2012} and plastikstufe \cite{Nie06,MNPS13}. In particular we focus on the following (a bit technical) criterion for overtwistedness. One of the main goals of this paper is to remove the technical conditions of the following theorem from \cite{CMP15}, whose proof relies on the earlier work of \cite{MNPS13}.

\begin{theorem}[\cite{CMP15}] \label{thm:PS_criterion_old}
	A contact manifold is overtwisted if it contains a small plastikstufe with spherical core and trivial rotation.
\end{theorem}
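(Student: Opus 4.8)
The plan is to deduce overtwistedness from the Borman--Eliashberg--Murphy characterization of \cite{BEM15}: a $(2n+1)$-dimensional contact manifold is overtwisted precisely when it admits an embedding of the standard overtwisted disk $(\Delta_{\ot}^{2n},\zeta_{\ot})$ together with a germ of the ambient contact structure along it. It therefore suffices to locate such a disk inside an arbitrarily large tubular neighborhood of the given small plastikstufe $PS(S^{n-1})=S^{n-1}\times\DD^2$, where the $\DD^2$-factor carries the singular foliation $\FF_{\ot}$ with one elliptic point and closed leaf $\p\DD^2$.

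The first step is a neighborhood normal form. Because the core of the plastikstufe is the standard sphere $S^{n-1}$ and the rotation invariant vanishes, the data classifying a contact neighborhood of $PS(S^{n-1})$ (a normal bundle together with the transverse behaviour of $\xi$) is pinned down to a standard form, and a Moser-type argument identifies such a neighborhood with an explicit model built from $(\DD^2,\FF_{\ot})$, the cotangent bundle $T^*S^{n-1}$, and a collar parameter, equipped with an explicit contact form; this is where the structure theory of \cite{MNPS13} enters. The hypothesis that the plastikstufe be \emph{small} is exactly what guarantees that, in this model, the cotangent radius and the collar length may be taken as large as we please, so that the model contains as much room as the construction below requires.

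The second step, which I expect to be the main obstacle, is to produce the BEM overtwisted disk inside this standard model. Localizing near a point $p\in S^{n-1}$ and using Darboux coordinates on $T^*_pS^{n-1}$ together with a small arc of $S^{n-1}$ through $p$, one obtains a chart of the form $\RR^2\times\RR^{2(n-1)}$ in which the $\RR^2$-factor carries a piece of the foliation $\FF_{\ot}$ inherited from $\DD^2$ and the $\RR^{2(n-1)}$-factor is a Darboux chart; one then has to interpolate between the standard contact form on this chart and the defining germ of $(\Delta_{\ot}^{2n},\zeta_{\ot})$. The difficulty is precisely this final identification: the BEM germ is prescribed by profile functions subject to a system of inequalities rather than by a closed formula, so the matching is a delicate interpolation throughout which the contact condition must be maintained, and one must recover the full germ of $\xi$ along the disk, not merely the disk itself. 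It is exactly the smallness, the sphericity of the core, and the triviality of the rotation that render the ambient model standard enough for this interpolation to be written down; dispensing with these hypotheses is the objective of the present paper and will require a more intrinsic argument.
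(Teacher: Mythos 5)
Your proposal takes a fundamentally different route from the one used in \cite{CMP15} (and recounted in this paper's introduction), and it has a gap that is not a loose end but the entire content of the theorem. The route in the literature is: by \cite{MNPS13}, a Legendrian submanifold is loose if its complement contains a small plastikstufe with spherical core and trivial rotation; apply this to a copy of the standard Legendrian unknot placed in a Darboux ball disjoint from the plastikstufe; then invoke \autoref{thm:CMP}, the characterization from \cite{CMP15} that a contact manifold of dimension $\geq 5$ is overtwisted iff the standard Legendrian unknot is loose. Nowhere does one attempt to exhibit the BEM overtwisted disk $(\Delta_{\ot}^{2n},\zeta_{\ot})$ directly; indeed, the raison d'\^etre of \autoref{thm:CMP} is to replace the BEM germ, which as you say is given by profile functions with no closed form, by the far more tractable looseness criterion.

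Your Step 2 is therefore not a delicate detail to be filled in later, it is an unsolved problem substituted for the theorem. The plastikstufe $P_S$ is an $(n{+}1)$-dimensional coisotropic submanifold, while the BEM disk is a $2n$-dimensional piecewise-smooth hypersurface with a prescribed germ; there is no known interpolation that converts a standard contact neighborhood of the former into an embedding of the latter, and nothing in the structure of the model $(\DD^2,\FF_{\ot})\times T^\ast S^{n-1}$ makes such a matching automatic. You acknowledge the difficulty but offer no mechanism for resolving it, so the argument does not close. Moreover, the role you assign to the hypotheses is misattributed: smallness, spherical core, and trivial rotation are not there to ``render the ambient model standard enough'' for a normal-form interpolation, they are precisely the hypotheses under which the disordering argument of \cite{MNPS13} shows looseness of a Legendrian in the complement, and they are entirely dispensed with in the present paper by going through the Legendrian-isotopy constructions of \autoref{sec:Legendrian_isotopy}--\ref{sec:PS_to_OT_general} rather than any direct comparison with the BEM model.
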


Plastikstufe, originally introduced by K. Niederkr\"uger in \cite{Nie06} (see also \cite{Gro85}), is a certain submanifold of a contact manifold which can be thought of as a parametrized family of overtwisted disks in dimension 3. See \autoref{subsec:PS} for more careful definition of the plastikstufe. According to \cite{Nie06}, the presence of an embedded plastikstufe obstructs (strong) symplectic fillings. Let us remark that the requirement of the plastikstufe to be small, with spherical core and trivial rotation as stated in \autoref{thm:PS_criterion_old} is purely technical and will not be needed in this paper. So we refer the interested reader to \cite{MNPS13} for more details. It should be mentioned that by the work of Murphy, Niederkr\"uger, O. Plamenevskaya and A. Stipsicz \cite{MNPS13}, it is known that a Legendrian submanifold is loose, in the sense of Murphy \cite{Mur12}, if its complement contains an embedded small plastikstufe with spherical core and trivial rotation. Then \autoref{thm:PS_criterion_old} follows from the characterization of overtwistedness using loose Legendrian unknot. See \autoref{subsec:loose criteria} for more discussions on the looseness of Legendrian submanifolds.

Out first main result removes the technical conditions in \autoref{thm:PS_criterion_old}.

\begin{theorem} \label{thm:PS_criterion_new}
	A contact manifold is overtwisted if it contains an embedded plastikstufe.
\end{theorem}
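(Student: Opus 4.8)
The plan is to deduce \autoref{thm:PS_criterion_new} from the already-established \autoref{thm:PS_criterion_old}: starting from an arbitrary embedded plastikstufe, I will plant, inside an arbitrarily small neighborhood of its core, a \emph{new} plastikstufe which is small, has spherical core, and has trivial rotation. Once such a plastikstufe has been exhibited anywhere in $(M,\xi)$, \autoref{thm:PS_criterion_old} applies verbatim and gives overtwistedness, so no further argument is needed; the entire content is a local construction near the core.

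The first ingredient is the standard neighborhood description of the core of a plastikstufe (see \autoref{subsec:PS}, and \cite{Nie06,MNPS13}). A neighborhood of the core $B$ of an embedded plastikstufe $\mathrm{PS}(B)$ in $(M^{2n+1},\xi)$ is contactomorphic to a neighborhood of $\{r=0\}\times B$ in a model $D^2_{(r,\phi)}\times Y$, where $(Y,\beta_Y)$ is a $(2n-1)$-dimensional contact manifold containing $B$ as a closed Legendrian submanifold, the ambient contact form is assembled from $\beta_Y$ and the overtwisted-disk form in the $(r,\phi)$ variables --- possibly twisted by the rotation of the plastikstufe along the cycles of $B$ --- and $\mathrm{PS}(B)$ itself corresponds to $\{r\le\pi\}\times B$, with Legendrian boundary $\{r=\pi\}\times B$.

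Now for the construction. Inside $(Y,\beta_Y)$ pick a Darboux ball $V$ and, inside $V$, the standard Legendrian unknot $\Sigma\cong S^{n-1}$. Over a contractible set all framing data is trivial, so near $V$ the model form may be untwisted and, after standardizing $\beta_Y$ near $\Sigma$ by the Legendrian neighborhood theorem inside $Y$, one sees that $P':=\{r\le\pi\}\times\Sigma$ is again a plastikstufe whose local model near its core $\{r=0\}\times\Sigma$ is precisely Niederkr\"uger's model for a plastikstufe on the Legendrian unknot. By construction $P'$ is the standard model plastikstufe with spherical core: a neighborhood of it embeds into the standard contact $\RR^{2n+1}$, so it is small, and its rotation is trivial. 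Applying \autoref{thm:PS_criterion_old} to $P'$ finishes the proof of \autoref{thm:PS_criterion_new}.

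The main obstacle is to make the normal form of the second paragraph precise enough to run the third. Concretely, one must understand exactly how the rotation of a \emph{general} embedded plastikstufe enters the neighborhood model of its core, and verify that this rotation is encoded by data that trivializes over the contractible ball $V$ in which $\Sigma$ is planted; and one must check, against the definitions of \autoref{subsec:PS} and \cite{MNPS13}, that $P'$ satisfies \emph{all three} of ``small'', ``spherical core'' and ``trivial rotation'' on the nose --- in particular that the standard model near the core of $P'$ propagates to a standard neighborhood of all of $P'$, boundary $S^1\times\Sigma$ included. Everything else is formal: the sole role of the given plastikstufe is to furnish a model neighborhood of some core, inside which a small, untwisted, spherical-core plastikstufe can be installed.
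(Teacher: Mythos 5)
Your strategy --- to manufacture a small, spherical-core, trivial-rotation plastikstufe inside the given one and then invoke \autoref{thm:PS_criterion_old} --- is entirely different from the paper's route (which is to directly build a loose Legendrian unknot near the plastikstufe and appeal to \autoref{thm:CMP}), and unfortunately it has a genuine gap in the very first ingredient.

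The claimed normal form does not exist. It is not true that a neighborhood of the core $B$ of a plastikstufe is contactomorphic to a neighborhood of $\{r=0\}\times B$ in a model $D^2_{(r,\phi)}\times Y$ with a split form ``$\beta_Y$ plus the overtwisted-disk form in $(r,\phi)$'' that is valid out to $r=\pi$. The plastikstufe model form is $\alpha_{\ot}-\lambda_S=\cos r\,dz+r\sin r\,d\theta-\lambda_S$; the factor $\cos r$ on $dz$ prevents this from being a fibered form $\beta_Y+f(r)d\theta$ for any contact form $\beta_Y$ once $r$ is bounded away from $0$. (Indeed on the $r,\theta,z$ factor alone, a split form would be the \emph{tight} contact structure $\ker(dz+f(r)d\theta)$, whereas $\alpha_{\ot}$ defines an \emph{overtwisted} one; no contactomorphism of a tubular neighborhood can reconcile them beyond a Darboux-small region.) The fibered model $D^2\times Y$ is only accurate near $r=0$, i.e.\ in a small $\{r<\delta\}$ neighborhood of the core, and a plastikstufe cannot fit inside $\{r<\delta\}$ --- it needs $r$ all the way to $\pi$.

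Even if you instead work in the \emph{full} standard neighborhood of $P_S$, which is a neighborhood of $D_{\ot}\times S$ in $\RR^3_{\ot}\times T^*S$ with form $\alpha_{\ot}-\lambda_S$, planting a spherical core does not work. To have a plastikstufe $D_{\ot}\times\Sigma$ with the correct characteristic foliation and Legendrian boundary, the restriction of the contact form to $\p D_{\ot}\times\Sigma$ must vanish. But $\alpha|_{D_{\ot}\times\Sigma}=r\sin r\,d\theta-\lambda_S|_\Sigma$ (since $dz=0$ on the slice $z=0$), and at $r=\pi$ this equals $-\lambda_S|_\Sigma$. Thus the boundary $\p D_{\ot}\times\Sigma$ is Legendrian only when $\lambda_S|_\Sigma\equiv 0$; for a closed $\Sigma\subset T^*S$ of dimension $n-1$, this forces $\Sigma$ to lie in the zero section, i.e.\ $\Sigma$ is a union of components of $S$. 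Choosing $\Sigma$ to be a Legendrian unknot in a Darboux ball of the transverse contact manifold $Y\cong\RR_z\times T^*S$ does not help either: then $\lambda_S|_\Sigma=dz|_\Sigma$ is nonzero, and one computes that $\{r\le\pi\}\times\Sigma$ again fails to have Legendrian boundary (the restricted form at $r=\pi$ is $2\,dz|_\Sigma$). The ``rotation'' issue you flag is therefore not the real obstruction; the construction fails before rotation ever enters.

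In contrast, the paper never attempts to produce a new plastikstufe at all. It builds a Legendrian sphere in an arbitrarily small neighborhood of $P_S$ out of carefully chosen Legendrian isotopies in the overtwisted $\RR^3_{\ot}$ fibers (the type I and type II isotopies of \autoref{sec:Legendrian_isotopy}), arranges it to contain a loose chart, checks it is Legendrian isotopic to the standard unknot, and then applies \autoref{thm:CMP}. The technical difficulties that appear there --- controlling the $\alpha_{\ot}$-derivative of the isotopy, lifting to avoid self-intersections when wrapping around $S$, capping off with standard disks --- are precisely the price for not having a split normal form near the plastikstufe. If the reduction you propose were available, that machinery would be unnecessary; the paper's length is an indication that it is not.
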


Let us note that by the overtwisted $h$-principle established in \cite{BEM15}, one can always find embedded plastikstufes in overtwisted contact manifolds.

Later, the notion of plastikstufe is generalized by P. Massot, Niederkr\"uger and C. Wendl in \cite{MNW13} to the so-called {\em bordered Legendrian open books}, or bLobs in short, which is, roughly speaking, a compact submanifold of $(M,\xi)$ with non-empty Legendrian boundary and comes with an open book decomposition whose pages are also Legendrian. In particular, the plastikstufe may be regarded as a bLob such that the page is diffeomorphic to the product of the binding and a closed interval. See \autoref{subsec:PS} for more details about the definition of bLobs. 

For contact 5-manifolds, \autoref{thm:PS_criterion_new} can be strengthened as follows.

\begin{theorem} \label{thm:bLob_criterion}
	A contact 5-manifold is overtwisted if it contains an embedded bLob.
\end{theorem}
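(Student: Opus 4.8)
The plan is to bootstrap from \autoref{thm:PS_criterion_new}: I will produce an embedded plastikstufe inside an arbitrarily small neighborhood of a given embedded bLob $B\subset(M^5,\xi)$, and then quote \autoref{thm:PS_criterion_new} to conclude that $(M,\xi)$ is overtwisted. Since a plastikstufe is nothing but a bLob all of whose pages are products of the binding with a closed interval, the task is to carve out of $B$ a sub-bLob with cylindrical pages, and the natural place to look is a neighborhood of the binding $K$ of $B$. Here the hypothesis $\dim M=5$ forces $K$ to be a Legendrian link; a neighborhood in $B$ of a component $K_0\cong S^1$ is a solid torus $K_0\times D^2$ carrying the trivial radial open book, and its pages $K_0\times\ell_\theta$ are precisely the traces of the pages of $B$, hence Legendrian annuli.

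First I would record a normal form for $\xi$ near $K_0$: using the Legendrian neighborhood theorem for $K_0$, the fact that the pages issuing from $K_0$ are Legendrian, and the fact that the open book monodromy is the identity near the binding, one writes down an explicit model in coordinates $K_0\times D^2\times\RR^2$, in which the characteristic foliation of $\xi$ on the slice $K_0\times D^2_\epsilon$ has the annuli $K_0\times\ell_\theta$ as leaves and is singular exactly along the core $K_0\times\{0\}$. The crux --- and the step I expect to be the real obstacle --- is that this slice is not yet a plastikstufe: the plastikstufe axioms demand that the outer boundary $K_0\times\partial D^2_\epsilon$ be a closed Legendrian leaf of the characteristic foliation, whereas in $B$ the pages cross that torus transversally. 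So I would deform the embedded surface $K_0\times D^2_\epsilon$ inside its normal neighborhood in $M$, supported near the outer boundary, so that in each $\{x\}\times D^2$ the foliation becomes the standard overtwisted-disk foliation --- the radial leaves turning around and limiting onto a closed leaf parallel to $\{x\}\times\partial D^2_\epsilon$ --- done uniformly in $x\in K_0$. This is a one-parameter family, over $K_0\cong S^1$, of surface reshapings inside a contact $3$-fold, which can be carried out by an explicit model computation or a convex-surface argument; this appears to be exactly the point at which $\dim M=5$ is used, since over a binding of positive dimension the same elementary ``turning around'' is no longer available by these means.

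Once the reshaping is done, $K_0\times D^2_\epsilon$ is an embedded plastikstufe $PS(K_0)\subset(M,\xi)$ contained in an arbitrarily small neighborhood of $B$, and \autoref{thm:PS_criterion_new} shows that $(M,\xi)$ is overtwisted. The loose ends --- that the deformation preserves embeddedness, stays within a prescribed neighborhood of $B$, and genuinely yields Niederkr\"uger's plastikstufe normal form rather than merely the correct characteristic foliation --- are routine once the model of the first step is fixed, so essentially all of the work is concentrated in the reshaping step.
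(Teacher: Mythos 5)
Your strategy---carve a plastikstufe out of a small neighborhood of a binding component $K_0$ and invoke \autoref{thm:PS_criterion_new}---has a genuine gap, and it is located precisely at the step you flag as the obstacle. The problem is that the ``reshaping'' near the binding cannot work, even in principle. Near the binding of a bLob, the local model is $K_0\times D_\delta\times(\text{normal directions})$ with the contact form essentially $dz+r^2\,d\theta-p\,dq$; this is just a standard Darboux-type germ along a Legendrian circle, with radial characteristic foliation on each small disk $\{x\}\times D_\delta$. To turn this into a plastikstufe you would need the characteristic foliation on $\{x\}\times D_\epsilon$ to develop a closed Legendrian leaf near $\partial D_\epsilon$, but the amount the contact hyperplanes wind around $K_0$ in an $\epsilon$-neighborhood is bounded and tends to zero with $\epsilon$, whereas an overtwisted foliation requires a fixed amount of winding (the leaves must reach the circle where $\xi$ becomes perpendicular to the disk, at $r=\pi/2$ in the model $\alpha_{\ot}=\cos r\,dz+r\sin r\,d\theta$). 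No surface isotopy within a small neighborhood of $K_0$ can create that winding. A sanity check confirms this: the same germ---a Legendrian circle with an $S^1$-family of radial Legendrian disk leaves emanating from it---occurs in plenty of tight contact $5$-manifolds (e.g.\ near any Legendrian unknot in $(S^5,\xi_{\std})$), so if your deformation existed it would prove every contact $5$-manifold is overtwisted. The missing ingredient is the far Legendrian boundary $S'$ of the bLob: the overtwistedness is a global feature of the page, not a local feature of the binding.

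For comparison, the paper's argument deliberately avoids trying to produce a plastikstufe (and explicitly remarks that its construction ``does not immediately produce a plastikstufe out of a general bLob''). Instead, it chooses a properly embedded arc $\gamma$ in the page running from the binding $S$ all the way to the Legendrian boundary $S'$, and takes $\gamma\times[0,1]$ inside the (slit) bLob as a $2$-disk slice; after a small desingularization near the cusp it becomes a genuine $2$-dimensional overtwisted disk $D^\circ_{\ot}(\gamma)$ whose boundary uses $S'$ as the ``closed leaf.'' That overtwisted disk is the substitute for the plastikstufe fiber, but only a germ of it comes with a $1$-parameter family; to wrap around the binding, the paper then runs a Morse-theoretic decomposition of the page into rectangles and saddle pieces $X_i$, builds a loose Legendrian surface by propagating the type~I Legendrian isotopy of \autoref{subsec:type_I} along gradient trajectories, inserts explicit Legendrian bridges $T$ over the saddles, and caps off to obtain a loose Legendrian $S^2$ isotopic to the unknot, concluding via \autoref{thm:CMP}.
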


It is shown in \cite[Theorem 4.4]{MNW13} that the presence of an embedded bLob obstructs weak (semipositive) symplectic fillings given a technical cohomological condition on the symplectic form. Combining with \autoref{thm:bLob_criterion} and the overtwisted $h$-principle of course, one can easily remove the cohomological condition, at least in dimension 5.

It would be very interesting to see if the methods in this paper, outlined in \autoref{sec:outline}, can be applied to deal with bLobs in any dimension. In fact, the following corollary follows easily from \autoref{thm:PS_criterion_new} and \autoref{thm:bLob_criterion}.

\begin{cor} \label{cor:higher_dim}
	A contact manifold is overtwisted iff it contains an embedded bLob whose page is diffeomorphic to $W \times F$, where $W$ is a closed manifold and $F$ is a compact surface with (disconnected) boundary.
\end{cor}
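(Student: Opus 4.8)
The plan is to prove the biconditional by two separate implications: the ``only if'' direction by the overtwisted $h$-principle of \cite{BEM15}, and the ``if'' direction by reducing to \autoref{thm:PS_criterion_new} (and, when $\dim M=5$, directly to \autoref{thm:bLob_criterion}). Here $M$ is tacitly assumed to have dimension at least five, since in $M^{2n+1}$ a bLob has an $n$-dimensional page and an $(n-1)$-dimensional binding, so the hypothesis on the right-hand side only makes sense for $n\ge 2$. Recall, as in the introduction, that a plastikstufe with closed core $N$ is the same thing as a bLob whose page is $N\times[0,1]$.

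Assume first that $(M,\xi)$ is overtwisted. By \cite{BEM15} (see the remark following \autoref{thm:PS_criterion_new}) it contains an embedded plastikstufe, and --- exploiting the flexibility of overtwisted contact structures, or the existence statement for plastikstufes with prescribed core --- we may take the core to be $N=W\times S^1$ for a closed $(n-2)$-manifold $W$, say $W=S^{n-2}$. Its page is then $N\times[0,1]=W\times(S^1\times[0,1])$, i.e.\ $W\times A$ with $A=S^1\times[0,1]$ an annulus, which is a compact surface with disconnected boundary; so $M$ contains an embedded bLob of the required form. When $n=2$ one may even take $W$ to be a point, since the plastikstufe furnished by \cite{BEM15} already has annular page.

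Conversely, suppose $B\subset M^{2n+1}$ is an embedded bLob whose page is $P=W\times F$ with $W$ closed and $F$ a compact surface with disconnected boundary. For $n=2$ this is exactly \autoref{thm:bLob_criterion}. In general I would reduce to \autoref{thm:PS_criterion_new} by producing an embedded plastikstufe inside $B$. Since $\partial F$ is disconnected, the page boundary $\partial P=\bigsqcup_i(W\times\partial_iF)$ is a disjoint union of at least two closed manifolds, and in the bordered open book of $B$ these split into binding components and Legendrian-boundary components, with at least one of each (the binding of a bLob being non-empty). Choose a binding component $W\times\partial_1F$. A collar of $\partial_1F$ in $F$ is an annulus $\partial_1F\times[0,1]$, so a collar of $W\times\partial_1F$ in $P$ has the form $N\times[0,1]$ with $N:=W\times\partial_1F$ closed of dimension $n-1$. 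Using that the monodromy of an open book is the identity near the binding and that the pages of $B$ are Legendrian, I would identify a neighborhood of $W\times\partial_1F$ in $B$ with $N\times\DD^2$ carrying the overtwisted-disk germ --- that is, with the plastikstufe $\mathrm{PS}(N)$ --- and then \autoref{thm:PS_criterion_new} gives overtwistedness.

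The hard part is this last identification: one must check that the contact germ along the chosen binding component of the page really is the overtwisted-disk germ, so that what has been located is a genuine embedded plastikstufe and not merely a twisted relative of one. This is precisely where the product structure $P=W\times F$ and the disconnectedness of $\partial F$ are used, and it is the same difficulty that keeps \autoref{thm:bLob_criterion} from being known in all dimensions; the remaining ingredients --- the choice of $W$ in the ``only if'' direction, and the bookkeeping with the definitions of \cite{MNW13} --- are routine.
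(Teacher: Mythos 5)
Your ``if'' direction takes the wrong route, and the place where you say ``the hard part is this last identification'' is not a loose end that could be tightened --- it is where the approach fails. You propose to find a plastikstufe inside the given bLob $B$ by taking a collar $N\times[0,1]$ of a binding component $W\times\partial_1F$ in the page and identifying a neighborhood of that binding with $N\times\DD^2$ carrying the overtwisted-disk germ. But a neighborhood of the binding of any bLob carries only the \emph{elliptic-center} germ of $D_{\ot}$ (the model $\{r\le\delta\}\subset D_{\ot}$ used in \autoref{defn:slit_bLob}), never the germ of the full overtwisted disk: the outer boundary $N\times\{1\}$ of your collar sits in the interior of the pages, so the circle $\{r=\delta\}$ traced around the open book is transverse to the singular Legendrian foliation, not a closed leaf. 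The overtwistedness of a plastikstufe comes precisely from its Legendrian boundary being a limit cycle, and a binding collar has no such thing. In other words, what you have located is ``merely a twisted relative'' by construction, and no amount of using the product structure $P=W\times F$ can repair this within your scheme; your remark that this is ``the same difficulty that keeps \autoref{thm:bLob_criterion} from being known in all dimensions'' actually misses the point of the corollary, whose content is precisely that the product hypothesis allows one to \emph{sidestep} that difficulty.

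The paper's argument uses the product structure on the page in an entirely different way. It identifies the (slit) bLob germ $\SB(W\times F)$ with $\SB(F)\times T^\ast W$ --- that is, it peels the $W$ factor off the page and into the cotangent directions of the ambient manifold, rather than keeping $W$ inside the page and trying to find a sub-plastikstufe there. Since $F$ is a compact surface with disconnected boundary, $\SB(F)$ is a $5$-dimensional bLob germ, which is overtwisted by \autoref{thm:bLob_criterion}; by the $h$-principle of \cite{BEM15} it therefore contains a plastikstufe $P_{S^1}$, and then $P_{S^1}\times W\subset\SB(F)\times T^\ast W$ is a plastikstufe with core $W\times S^1$ inside $\SB(W\times F)$. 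Now \autoref{thm:PS_criterion_new} applies. Notice that this reduction genuinely uses \autoref{thm:bLob_criterion} as a black box in dimension $5$ and \autoref{thm:PS_criterion_new} in all dimensions; it does not attempt to locate a plastikstufe inside the original bLob along the binding, which is the step your argument cannot complete. Your ``only if'' direction (take a plastikstufe with core $W\times S^1$, whose page $W\times S^1\times[0,1]=W\times(\text{annulus})$ has the required form) is fine modulo the usual appeal to the $h$-principle, and agrees with how the paper implicitly treats that direction.
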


Finally, to give some applications of the results proved in this paper, we study two examples, both of which are well-known to the experts. The first is the (contact) fibered connected sum operation suggested by Gromov and carried out by H. Geiges in \cite{Gei97}. In particular \autoref{thm:OT_fibered_sum} generalizes a result of Casals-Murphy \cite[Corollary 2]{CM16}. The second example concerns open books with monodromy equal to a negative power of the Dehn twist. In particular, we generalize a result of \cite{CMP15} which states that contact structures supported by a negatively stabilized open book is overtwisted. The idea that such objects should be related to overtwistedness goes back to E. Giroux. Our result is also consistent with a computation of Bourgeois-van Koert \cite{BvK10} on the vanishing of the contact homology. Although not studied in this paper, let us remark that there is another well-known method to produce plastikstufes, at least in dimension 5, using a generalized Lutz twist introduced by Etnyre-Pancholi \cite{EP11}.

Since the first appearance of this paper, there has appeared the work of J. Adachi \cite{Ada1609,Ada1610} which, in particular, proves an analogue of \autoref{thm:PS_criterion_old} using plastikstufes with certain torus core. He also give a construction of yet another Lutz-type twist which produces such plastikstufes.

The paper is organized as follows. We first outline the main ideas involved in the proof of the above results in \autoref{sec:outline}. In \autoref{sec:prelim} we review some background knowledge in (higher-dimensional) contact topology which is relevant for our purposes. This includes in particular the criteria of overtwistedness and the definition of plastikstufe and (slit) bLob. \autoref{sec:Legendrian_isotopy} is a technical section where we construct two special Legendrian isotopies, of type I and II, in a neighborhood of an overtwisted disk dimension 3. This will be crucial for our later constructions of the loose unknot. Then we prove \autoref{thm:PS_criterion_new} and \autoref{thm:bLob_criterion} in dimension 5 in \autoref{sec:PS_to_OT} and \autoref{sec:bLob_to_OT}, respectively. The proof of \autoref{thm:PS_criterion_new} for higher dimensional plastikstufe, as well as the proof of \autoref{cor:higher_dim}, are given in \autoref{sec:PS_to_OT_general}. Some applications are given in \autoref{sec:example}.\\

\noindent
\textsc{Acknowledgement:}
The author is deeply grateful to Ko Honda for numerous discussions and generously sharing his ideas on contact topology. He warmly thanks Emmy Murphy for making valuable comments on an earlier version of the paper, and Klaus Niederkr\"uger for explaining to the author previous works on overtwisted contatct structures. He thanks a referee for suggesting the applications in \autoref{sec:example}, and (possibly) another referee for detailed comments on both the mathematical and organizational part of the paper which greatly improves the level of rigor and clarity. Thanks also go to Jian Ge for useful communications. This work was initiated and partially done when the author visited Caltech in the spring 2016, and he thanks the department of Mathematics at Caltech for their hospitality.

\section{Outline of the main ideas} \label{sec:outline}

The idea of the proof of \autoref{thm:PS_criterion_new} and \autoref{thm:bLob_criterion} can be summarized in the following three steps, with increasing level of complexity. 

\textsc{Step 1}. 
Let $M$ be an overtwisted 3-manifold with a fixed contact form $\alpha_{\ot}$, e.g. a collar neighborhood of an overtwisted disk, and consider the associated higher-dimensional contact manifold $M \times T^\ast \RR^m, m \geq 1$. Let $(q_1,\cdots,q_m;p_1,\cdots,p_m)$ be the usual coordinates on $T^\ast \RR^m$. It is proved in \cite{CMP15} that $M \times T^\ast \RR^m$ equipped with the contact form 
	\begin{equation*}
		\alpha_{\ot} + \frac{1}{2} \sum_{i=1}^m ( q_i dp_i - p_idq_i )
	\end{equation*}
is overtwisted in the sense of \cite{BEM15}. From this it is deduced in \cite{CMP15} that a contact manifold (of dimension greater than 3) is overtwisted if and only if the standard Legendrian unknot is loose. Our starting point is to consider $M \times T^\ast \RR^m$ equipped with a different contact form 
	\begin{equation*}
		\alpha_{\ot} - \sum_{i=1}^m p_idq_i
	\end{equation*}
and an $\epsilon$-neighborhood $U_{\epsilon}$ of $M \times \RR^m \subset M \times T^\ast \RR^m$, where $\RR^m$ is identified with the 0-section in $T^\ast \RR^m$. We claim that for any $\epsilon>0$, there exists a loose Legendrian sphere in $U_{\epsilon}$ which is Legendrian isotopic to the standard unknot. To achieve this, we choose a (1-dimensional) stabilized Legendrian unknot $K_0 \in M \times \{0\}$ with $\tb(K_0)=-1$, whose existence is guaranteed by the overtwistedness of $M$. Then along every radial directions in $\RR^m$, we ``slowly'' Legendrian isotop $K_0$ to the standard Legendrian unknot in $M$. Here ``slow'' means, roughly speaking, the derivative of the isotopy, measured against $\alpha_{\ot}$, is small with respect to $\epsilon$. Finally we cap off the Legendrian by a family of standard (2-dimensional) Legendrian hemispheres (cf. \autoref{fig:cap}). Then the Legendrian lift of the totality of the Legendrian isotopies in all radial directions of $\RR^m$ together with the capping-off disks gives the desired loose Legendrian sphere which turns out to be Legendrian isotopic to the standard unknot.

\textsc{Step 2}. 
We first restrict ourselves to dimension 5, and consider the plastikstufe $P_S$ with core $S^1$. By Step 1, given any overtwisted contact 3-manifold $M$, an arbitrarily small neighborhood of $M \times \RR \subset M \times T^\ast \RR$ is overtwisted, i.e., contains a loose unknot. Now if we replace $\RR$ by $S^1$, the idea is to wrap the Legendrian isotopy in $M$ around $S^1$ while making sure that the resulting (loose) Legendrian $S^2$ is embedded and Legendrian isotopic to the standard unknot. Here we will use the type I Legendrian isotopy constructed in \autoref{sec:Legendrian_isotopy}. Ideas in this step essentially give the proof of \autoref{thm:PS_criterion_new} in the 5-dimensional case. Let us also point out that similar ideas are known to R. Casals. In particular, he proved \cite{Cas} that the $\epsilon$-neighborhood $$\RR^3 \times S^1 \times (-\epsilon, \epsilon) \subset (\RR^3 \times T^\ast S^1, \ker (\alpha_{\ot} -pdq))$$ of $\RR^3 \times S^1$ is overtwisted for any $\epsilon>0$.

\textsc{Step 3}. 
Now we switch to the case of a general plastikstufe in any dimension. In this case, we have a family of 2-dimensional overtwisted disks parametrized by a closed manifold $S$, i.e., the core of the plastikstufe. Pick a point $u \in S$ and consider the exponential map $\exp_u: T_u S \to S$ with respect to some (complete) Riemannian metric on $S$. Then the loose Legendrian unknot will be, roughly speaking, the Legendrian lift of the totality of Legendrian isotopies (of type II) of $K_0$ along all geodesic rays emanating from $u$, together with capping-off disks as in Step 1. Let us remark that the method of using type II Legendrian isotopy works in any dimensions, but we use instead the type I isotopy in dimension 5 only for the reason that it is more adaptable to the case of bLobs.

\textsc{Step 4}.
Consider a general bLob which may not split as a product. In order for the previous strategy to work, one first needs to find a 2-dimensional disk $D_{\ot}$ in the bLob cutting through the binding, on which the characteristic foliation looks exactly like the overtwisted disk in dimension 3. Such a disk is easily found in a plastikstufe, but is not obvious in general bLobs. In the case of plastikstufe, we have, in fact, a family of overtwisted disks parametrized by the binding. Unfortunately this symmetry breaks down for general bLobs, so some trick is needed to ``wrap around'' the binding. Let us make two remarks here. Firstly, the monodromy of the open book causes no difficulty and in fact, we will construct a modified version of the bLob, called {\em slit bLob}, in \autoref{subsec:PS} which does not involve the monodromy but still implies overtwistedness. Secondly, our construction does not immediately produce a plastikstufe out of a general bLob, although it formally follows from the $h$-principle for overtwisted contact structures.

\section{overtwistedness criteria and plastikstufe}  \label{sec:prelim}
In this section, we review some recent development in higher-dimensional flexibility of contact structures which is relevant for our purposes. 

\subsection{Loose Legendrian and criteria for overtwistedness} \label{subsec:loose criteria}
We recall the definition of loose chart and loose Legendrian submanifolds in higher dimensions following \cite{Mur12}. First consider the standard contact 3-space $(\RR^3, \xi_{\std}=\ker (dz-ydx))$. Let $\gamma \subset (\RR^3,\xi_{\std})$ be a stabilized Legendrian arc whose front projection is as shown in \autoref{fig:kink}. Specifically, the front projection of $\gamma$ has a unique transverse double point and a unique Reeb chord of length $a$, called the {\em action} of the stabilization. Let $B$ be an open ball in $\RR^3$ containing $\gamma$ of action $a$ as defined above. 

Now consider the Liouville space $T^\ast \RR^{n-1}$ with the usual dual coordinates $\{q,p\}$ and Liouville form $-pdq$. Let 
	\begin{equation*}
		V_C = \{|p|< C, |q|<C\} \subset T^\ast \RR^{n-1}
	\end{equation*}
be an open poly-disk in $T^\ast \RR^{n-1}$. Note that $B \times V_C$ is an open subset of $(\RR^{2n+1},\xi_{\std})$, which contains the Legendrian submanifold $\Lambda=\gamma \times \{|q|<C,p=0\}$. The pair $(B \times V_C, \Lambda)$ is called a {\em loose chart} if $a/C^2<2$. Finally, a Legendrian submanifold $L \subset (M,\xi)$ is {\em loose} if there exists a Darboux chart $U \subset M$ such that the $(U,U \cap L)$ is contactomorphic to a loose chart.

\begin{figure}[ht]
	\begin{overpic}[scale=.3]{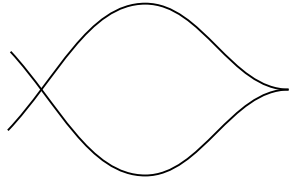}
	\end{overpic}
	\caption{The front projection of a stabilized Legendrian arc.}
	\label{fig:kink}
\end{figure}


Generalizing overtwisted contact structures in dimension 3, it is shown in \cite{BEM15} that overtwisted contact structures always exists in an almost contact manifold of any dimension and they obey a parametric $h$-principle. The precise content of the overtwisted $h$-principle is not particularly relevant for this paper so we refer the reader to \cite{BEM15} for more details.

Following \cite{CMP15}, it turns out that one can characterize overtwisted contact structures using loose Legendrian spheres discussed above. To be more precise, note that the standard contact sphere $(S^{2n-1},\xi_{\std})$ can be identified with the convex boundary of the unit ball $\DD^{2n} \subset \RR^{2n}$ equipped with the Liouville form $qdp-pdq$. Then the Lagrangian plane $\RR^n = \{p=0\}$ intersects $S^{2n-1}$ in a Legendrian sphere $K$, which we call the {\em standard Legendrian unknot}. By removing a point from $(S^{2n-1},\xi_{\std})$, we may view $K$ as a Legendrian sphere in $(\RR^{2n-1},\xi_{\std})$, which is also called the standard Legendrian unknot. Hence by Darboux's theorem, it makes sense to talk about standard Legendrian unknot in any contact manifold, well-defined up to Legendrian isotopy. 

Recall the following criterion for overtwistedness, which will be useful for our purposes.

\begin{theorem}[\cite{CMP15}] \label{thm:CMP}
	A contact manifold $(M,\xi)$ is overtwisted iff the standard Legendrian unknot is loose.
\end{theorem}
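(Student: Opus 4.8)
The plan is to prove the two implications separately. As inputs I would use: the overtwisted $h$-principle of \cite{BEM15}; the plastikstufe criterion \autoref{thm:PS_criterion_old}; the theorem of \cite{MNPS13} that a Legendrian submanifold whose complement contains an embedded small plastikstufe with spherical core and trivial rotation is loose; and two elementary facts, namely that looseness is invariant under ambient Legendrian isotopy, and that any two standard Legendrian unknots in a connected contact manifold are Legendrian isotopic (shrink each of them into a Darboux ball and slide the balls along a path, using Darboux's theorem).

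\emph{Overtwisted $\Rightarrow$ standard unknot loose.} By the existence part of the $h$-principle of \cite{BEM15} --- equivalently, by the fact that the standard model of an overtwisted contact structure already contains an embedded small plastikstufe with spherical core and trivial rotation --- every overtwisted $(M,\xi)$ contains such an embedded plastikstufe $P$. Since $P$ has positive codimension, a standard unknot $K$ can be shrunk into a small Darboux ball disjoint from $P$. Then the complement of $K$ contains $P$, so by \cite{MNPS13} the Legendrian $K$ is loose; by isotopy-invariance of looseness, every standard unknot is loose.

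\emph{Standard unknot loose $\Rightarrow$ overtwisted.} Here the plan is to manufacture, out of a loose chart for $K$, a small plastikstufe with spherical core and trivial rotation, and then apply \autoref{thm:PS_criterion_old}. Fix a loose chart $(B \times V_C, \Lambda)$ for $K$ with $a/C^2 < 2$, and recall that $K$ bounds the standard isotropic disk $\Delta$ inside a Darboux ball. Working in a neighborhood of $\Delta$ together with the loose chart, I would explicitly build a family of three-dimensional overtwisted models parametrized by a sphere --- a small plastikstufe with spherical core and trivial rotation --- using the transverse room furnished by the strict inequality $a/C^2 < 2$ to accommodate each overtwisted disk; this explicit construction, modelled on \cite{CMP15}, is the substance of the argument. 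Once the plastikstufe is in place, \autoref{thm:PS_criterion_old} yields that $(M,\xi)$ is overtwisted.

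The main obstacle is this second implication, and within it the explicit geometric construction of a plastikstufe meeting the technical hypotheses of \autoref{thm:PS_criterion_old}: one must convert the combinatorial loose-chart inequality $a/C^2 < 2$ into enough transverse space to insert a full parametrized Lutz-type twist, all the while keeping the core a standardly embedded sphere and the rotation class trivial. The forward implication, by contrast, is essentially formal once the $h$-principle of \cite{BEM15} and the looseness criterion of \cite{MNPS13} are granted; the only point needing care there is the general-position argument that a small standard unknot can be made disjoint from the plastikstufe.
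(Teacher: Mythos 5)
Your first direction is essentially the known argument: find (in the BEM15 overtwisted model) a small plastikstufe with spherical core and trivial rotation, push the standard unknot into a Darboux ball disjoint from it, and apply the looseness criterion of \cite{MNPS13}. This is correct in outline, though you are implicitly using the nontrivial fact — which needs an explicit computation in the overtwisted model — that such a plastikstufe with all three technical properties actually sits inside the standard overtwisted germ of \cite{BEM15}; that verification is part of what \cite{CMP15} supplies, so it should not be waved off as "formal."

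The second direction, however, is circular. You propose to deduce ``loose unknot $\Rightarrow$ overtwisted'' by manufacturing a plastikstufe out of the loose chart and then invoking \autoref{thm:PS_criterion_old}. But \autoref{thm:PS_criterion_old} is not an independent input: as the present paper states explicitly (see the discussion in \autoref{sec:intro}), \autoref{thm:PS_criterion_old} is itself \emph{derived from} the loose-unknot characterization \autoref{thm:CMP} together with the criterion of \cite{MNPS13} (``a Legendrian is loose if its complement contains a small plastikstufe with spherical core and trivial rotation''). So the logical chain in \cite{CMP15} runs \autoref{thm:CMP} $\Rightarrow$ \autoref{thm:PS_criterion_old}, and using the latter to prove the former goes in a circle. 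Independently of the circularity, I also do not believe there is a direct construction of a plastikstufe with spherical core and trivial rotation from a loose chart — the inequality $a/C^2 < 2$ gives you enough Liouville room to do Legendrian zigzag modifications, but it does not on its own produce a parametrized family of $2$-dimensional overtwisted disks with the required core and rotation class. The actual \cite{CMP15} argument for ``loose unknot $\Rightarrow$ overtwisted'' bypasses plastikstufes entirely: it works directly with the overtwisted germ of \cite{BEM15} and with Murphy's $h$-principle for loose Legendrian embeddings, comparing a neighborhood of the loose unknot against the explicit overtwisted model. To make your second direction non-circular you would need to reproduce an argument of that kind, not route through \autoref{thm:PS_criterion_old}.
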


\subsection{Plastikstufe and bordered Legendrian open book} \label{subsec:PS}
We start with the notion of plastikstufe following \cite{Nie06}. Consider the overtwisted contact manifold $(\RR^3,\xi_{\ot})$ with contact form $\alpha_{\ot}$ given in cylindrical coordinates by 
	\begin{equation} \label{eqn:alpha_ot}
		\alpha_{\ot} = \cos r dz + r\sin r d\theta.
	\end{equation}
We can find an explicit overtwisted disk (cf. \autoref{fig:OTdisk})
	\begin{equation*}
		D_{\ot} = \{ z=0, r \leq \pi \} \subset (\RR^3,\xi_{\ot})
	\end{equation*}

Let $S$ be a closed manifold and consider $(T^\ast S,\lambda)$ equipped with the canonical Liouville form $\lambda$. In the following, we identify $S$ with the 0-section of $T^\ast S$. Then we can construct a new contact manifold $\RR^3 \times T^\ast S$ with contact form $\alpha = \alpha_{\ot} - \lambda$. By definition the submanifold $P_S = D_{\ot} \times S$ is a {\em plastikstufe} with core $S$.

Note that $P_S$ is equipped with a singular codimension-1 foliation $\FF_{P_S}$ which is defined by the kernel of $\alpha|_{P_S}$. The leaves of $\FF_{P_S}$ are Legendrian submanifolds by construction. According to \cite{Hua15}, the germ of the contact structure near $P_S$ is uniquely determined by $\FF_{P_S}$. Observe that the singular foliation $\FF_{P_S}$ is nothing but the open book decomposition with page $S \times [0,1]$ and identity monodromy such that $S \times \{0\}$ is identified with the binding. Inspired by this observation, we introduce the notion of bordered Legendrian open book, or bLob in short, as defined in \cite{MNW13}. Here we give the definitions in full generality but our results are applied only to bLobs in dimension 5.

\begin{definition} \label{defn:bLob}
	Given a compact $n$-manifold $\Sigma$ with disconnected boundary. Let $S \subset \p \Sigma$ be a non-empty proper subset of the connected components of $\p \Sigma$. A {\em bordered Legendrian open book (bLob)} $(\Sigma,\phi)$ in a $(2n+1)$-dimensional contact manifold is a compact $(n+1)$-dimensional submanifold
		\begin{equation*}
			Y = \Sigma \times [0,1] / (x,t) \sim (x,t')~ \forall x \in S, t,t' \in [0,1] \text{ and } (y,0) \sim (\phi(y),1) ~\forall y \in \Sigma
		\end{equation*}
	where $\phi: \Sigma \to \Sigma$ is a diffeomorphism which restricts to the identity map in a collar neighborhood of $S \subset \Sigma$, and such that all pages $\Sigma \times \{t\}$ and the boundary $\p Y$ are Legendrian submanifolds.
\end{definition}

It is shown in \cite{MNW13} (see also \cite{Hua15}) that the germ of contact structure near a bLob $(\Sigma,\phi)$ is uniquely determined by the Legendrian open book. Hence in order to prove \autoref{thm:bLob_criterion}, it suffices to show that the germ of contact structure near a bLob is overtwisted. Note that although one can easily write down an explicit contact form in a neighborhood of the plastikstufe, it is in general hard to write down a global contact form in a neighborhood of a bLob. However we will describe in \autoref{sec:bLob_to_OT} a contact form near a bLob by decomposing it into simpler pieces.

It turns out the information about the monodromy in a bLob plays no role in showing the overtwistedness in \autoref{sec:bLob_to_OT}. So we introduce below the notion of a {\em slit bLob}.

\begin{definition} \label{defn:slit_bLob}
	Given a compact manifold $\Sigma$ with disconnected boundary. Let $S \subset \p \Sigma$ be a non-empty proper subset of the connected components of $\p \Sigma$. Choose $\delta>0$ and identify $S \times [0,\delta]$ with a closed collar neighborhood of $S \subset \Sigma$ such that $S \times \{0\}$ is identified with $S$. A {\em slit bLob} is the following topological space
		\begin{equation*}
			W = \Sigma \times [0,1] / (x,t) \sim (x,t')~ \forall x \in S, t,t' \in [0,1] \text{ and } (y,0) \sim (y,1) ~\forall y \in S \times [0,\delta],
		\end{equation*}
	which is a smooth manifold away from $S \times \{\delta\} \times \{0\} \subset S \times [0,\delta] \times \{0\} \subset \Sigma \times \{0\}$. Moreover, we assume that $W$ admits an embedding into a contact manifold such that $\p W$ and the pages are all Legendrian.
\end{definition}

In other words, one can think of a slit bLob as a bLob partially cut open along a page where the monodromy is applied. It is therefore clear that the existence of a bLob implies the existence of a slit bLob but the other way around implication is not obvious. Although a slit bLob is not smooth, it has Legendrian boundary and the standard neighborhood theorem applies, i.e., the contact germ near a slit bLob is uniquely determined. Observe that the contact germ near a slit bLob is independent of the choice of the collar neighborhood of the binding and $\delta>0$, so we simply denote a slit bLob associated with $\Sigma$ by $\SB(\Sigma)$.

The reason for introducing the above definition is the following strengthening of \autoref{thm:bLob_criterion}.

\begin{theorem} \label{thm:slit_bLob_criterion}
	A contact 5-manifold is overtwisted iff it contains an embedded slit bLob.
\end{theorem}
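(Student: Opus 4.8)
The plan is to treat the two implications separately; the ``only if'' direction is essentially formal. Indeed, if $(M,\xi)$ is an overtwisted contact $5$-manifold then by the overtwisted $h$-principle of \cite{BEM15} (cf.\ the remark after \autoref{thm:PS_criterion_new}) it contains an embedded plastikstufe $P_{S^1}=D_{\ot}\times S^1$, which is in particular an embedded bLob with page $S^1\times[0,1]$ and trivial monodromy; and as observed in \autoref{subsec:PS}, cutting a bLob open along a page produces an embedded slit bLob. So $(M,\xi)$ contains an embedded $\SB(\Sigma)$, and all the content is in the converse.

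For the ``if'' direction, suppose $\SB(\Sigma)\hookrightarrow(M,\xi)$. Since a slit bLob has Legendrian boundary and Legendrian pages, the standard Legendrian neighborhood theorem applies and the germ of $\xi$ along $\SB(\Sigma)$ is determined by the (slit) Legendrian open book data alone --- in particular it does not see the monodromy. Hence it suffices to produce, in an arbitrarily small neighborhood of $\SB(\Sigma)$, a loose Legendrian sphere that is Legendrian isotopic to the standard unknot, and then invoke \autoref{thm:CMP}. A preliminary step is to write an explicit contact form on such a neighborhood by decomposing it into pieces: over the collar $S\times[0,\delta]$ of the binding the local model is $D_{\ot}\times T^\ast(S\times[0,\delta])$ with form $\alpha_{\ot}-\lambda$ as in \autoref{subsec:PS}, while over the complement of the collar in a page one has a Legendrian-thickening piece of the form $\Sigma\times T^\ast[0,1]$; the slit along $S\times\{\delta\}\times\{0\}$ is precisely what allows these pieces to be glued without reference to $\phi$.

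The heart of the matter, and the step I expect to be the main obstacle, is to locate inside $\SB(\Sigma)$ a $2$-dimensional disk $D_{\ot}$ cutting transversally through the binding $S$ on which the induced characteristic foliation is exactly that of the standard overtwisted disk in $(\RR^3,\xi_{\ot})$. In a plastikstufe $D_{\ot}\times S$ such a disk --- indeed a whole $S$-family of them --- is manifestly present, but for a general page $\Sigma$ this symmetry is broken and the disk must be built by hand, running an arc out from the binding, through the interior of the pages, and back into the binding so as to ``wrap around'' it, all the while keeping the induced foliation model-standard. Once $D_{\ot}$ is in place the argument follows the template of Steps~1--3 of \autoref{sec:outline}: choose a $1$-dimensional stabilized Legendrian unknot $K_0$ with $\tb(K_0)=-1$ in a collar of $D_{\ot}$; Legendrian-isotope it ``slowly'' --- derivative small against $\alpha_{\ot}$ relative to the neighborhood size $\epsilon$ --- to the standard unknot along the $1$-parameter family of directions transverse to the binding, using the type~I Legendrian isotopy of \autoref{sec:Legendrian_isotopy}, which is the one adapted to bLobs; cap off with a family of standard Legendrian hemispheres as in \autoref{fig:cap}; and take the Legendrian lift of the totality. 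It then remains to check that this lift is embedded (this is where the slowness of the isotopy and the geometry of the ``wrap around'' enter), that it is Legendrian isotopic to the standard unknot (by contracting the family of isotopies), and that it is loose (by exhibiting a loose chart coming from the stabilization of $K_0$), after which \autoref{thm:CMP} completes the proof.
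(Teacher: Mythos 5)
Your treatment of the ``only if'' direction is correct and matches the paper: overtwistedness gives a plastikstufe by the $h$-principle, a plastikstufe is a bLob, and cutting a bLob along a page gives a slit bLob. Your Step~1 for the ``if'' direction is also essentially right and matches the paper: you correctly identify that the first obstacle is to locate a $2$-dimensional overtwisted slice $D^{\circ}_{\ot}$ cutting transversely through the binding with the model characteristic foliation, and your description of building it by running an arc from $S$ to $S'$ through the pages and smoothing the cusp is what the paper actually does.

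The gap is in what you call the ``wrap around,'' which you fold into a parenthetical remark (``this is where the slowness of the isotopy and the geometry of the `wrap around' enter''). In the plastikstufe case the core $S$ is $S^1$ and the fiber is a \emph{fixed} $D_{\ot}$, so geodesic flow on $S$ suffices; the paper's Theorem~\ref{thm:slit_bLob_criterion} is restricted to dimension $5$ precisely because in general the page $\Sigma$ has non-trivial topology and the ``slice at $\theta=\const$ above $p \in S$'' does not extend coherently as $p$ moves around $S$. The paper's Step~2 handles this with a genuinely separate idea: choose a Morse--Smale function $f:\Sigma\to\RR$ with only index-$1$ critical points (this is where $\dim\Sigma=2$ is used), decompose $\Sigma$ into rectangles foliated by gradient trajectories --- over which the argument really is the plastikstufe argument --- and saddle neighborhoods $X_i$. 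Over each $X_i$ the wrap-around is replaced by an explicit Legendrian ``bridge'' $T$, a sphere with four disks removed lifted from the surface $\widetilde{T}$ of (\ref{eqn:bridge}) inside the standard contact neighborhood of $X_i\times[0,2\pi]/{\sim}$; these bridges are what let the geodesic flow on $S$ jump past the saddles, producing a Legendrian surface $F$ that is topologically $S^2$ minus finitely many disks, which is then capped off. Without this decomposition-plus-bridge mechanism your ``take the Legendrian lift of the totality'' has no content beyond the plastikstufe case, and the claim that the resulting sphere is embedded and unknotted does not reduce to ``slowness'': it requires tracking the bridges in the front projection (as in \autoref{fig:bridge}). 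This is the main new ingredient of Section~\ref{sec:bLob_to_OT}, and it is missing from your argument.
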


It would be interesting to see if there are examples of contact manifolds in which a slit bLob can be found more easily than a bLob. The proof of \autoref{thm:slit_bLob_criterion} will be given in \autoref{sec:bLob_to_OT}.

\section{the standard overtwisted disk in dimension 3} \label{sec:Legendrian_isotopy}
This section if purely technical. The goal is to exhibit, in an arbitrarily small neighborhood of the standard overtwisted disk, two explicit Legendrian isotopies from a ``thick'' Legendrian unknot $J_{\text{thick}}$ with $\tb=-1$ to a ``thin'' Legendrian unknot $J_{\text{thin}}$. Roughly speaking, if we embedded the Legendrian unknot with $\tb=-1$ into a Darboux chart in the standard way, then the unique Reeb chord of $J_{\text{thick}}$ is much longer than the one of $J_{\text{thin}}$. See below for more precise definitions. 

As pointed out in \autoref{sec:outline}, these two Legendrian isotopies will be constructed with different properties which will suit our purposes of constructing the loose unknot in a neighborhood of plastikstufes of any dimension and bLobs in contact 5-manifolds, respectively.

\subsection{Construction of type I Legendrian isotopy} \label{subsec:type_I}
Recall from \autoref{subsec:PS} the overtwisted contact manifold $(\RR^3,\xi_{\ot})$ and the standard overtwisted disk 
	\begin{equation*}
		D_{\ot} = \{ z=0, r \leq \pi \} \subset (\RR^3,\xi_{\ot}) 
	\end{equation*}
Fix $\epsilon>0$, we consider an invariant collar neighborhood $D_{\ot} \times (-\epsilon,\epsilon)$ of $D_{\ot}$. Let 
	\begin{equation*}
		\Gamma = \{z=0,r=\pi/2\} \subset D_{\ot}
	\end{equation*}
be the circle along which the contact plane is perpendicular to $D_{\ot}$. Observe also that the rays $\{\theta=\const\}$ are all Legendrian. See \autoref{fig:OTdisk}.

\begin{figure}[ht]
	\begin{overpic}[scale=.3]{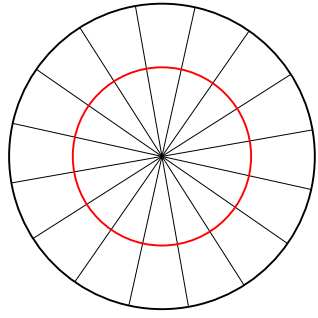}
	\end{overpic}
	\caption{The standard overtwisted disk. The red circle is $\Gamma$.}
	\label{fig:OTdisk}
\end{figure}

Given any $s \in (\pi/2,\pi]$, let $C_s \subset D_{\ot}$ be the circle of radius $s$. Parametrize $C_s$ by the angular coordinate $\theta_s \in (-\pi,\pi]$, where the subscript $s$ is to emphasize that the coordinate is considered on $C_s$. Clearly $C_\pi$ is Legendrian. For any $s \in (\pi/2,\pi)$, let $C'_s$ be the unique Legendrian passing through $z_{\theta=0}=0$ and whose projection to $D_{\ot}$ coincides with $C_s$. It is clear that $C'_s$ diffeomorphic to the real line. Here $z_\theta$ denotes the $z$-coordinate at angle $\theta$. Abusing notations, we also regard $\theta_s$ as a coordinate function on $C'_s$. Similarly the Legendrian $C'_s$ can be defined for $s \in (0,\pi/2)$.

Fix an arbitrarily small $\theta_0>0$. There exists $a_0$ smaller than but sufficiently close to $\pi$ such that the holonomy along the Legendrian arc $A_0 = \{ \theta_0 \leq \theta \leq 2\pi-\theta_0 \} \subset C'_{a_0}$, defined to be segment nontrivially intersecting the plane $\{z=0\}$, increases the $z$-coordinate by a positive $\delta \ll \epsilon$. Similarly there exists small $b_0>0$ such that the holomony along the Legendrian arc $B_0 = \{ -\theta_0 \leq \theta \leq \theta_0 \} \subset C'_{b_0}$ decreases the $z$-coordinate by $\delta$. Here the holonomies are measured in the counter-clockwise direction. Take Legendrian segments in the radial directions $R_{\pm \theta_0} = \{ \theta=\pm\theta_0, b_0 \leq r \leq a_0 \}$ on the planes $\{z=\mp \delta/2\}$, respectively. Patching the pieces together, we have now obtained the thick Legendrian loop $J_{\text{thick}} = A_0 \cup B_0 \cup R_{\theta_0} \cup R_{-\theta_0}$.

We define $J_{\text{thin}}$ in a similar way. Namely, there exists $a'_0 \in (\pi/2,\pi)$ such that the holonomy along the Legendrian arc $A'_0 = \{ \pi-\theta_0 \leq \theta \leq \pi+\theta_0 \}$ increases the $z$-coordinate by $\delta$. Also there exists $b'_0 \in (0,\pi/2)$ such that the holonomy along the Legendrian arc $B'_0 = \{ \theta_0-\pi \leq \theta \leq \theta_0+\pi \}$ increases the $z$-coordinate by $\delta$. Choosing the connecting Legendrian rays $R'_{\pm \theta_0}$ as before, we define the thin Legendrian loop $J_{\text{thin}} = A'_0 \cup B'_0 \cup R'_{\theta_0} \cup R'_{-\theta_0}$.

Finally to construct the Legendrian isotopy $J_t, t \in [0,1]$, from $J_{\text{thick}}$ to $J_{\text{thin}}$, it suffices to observe that for each choice of an angle between $\theta_0$ and $\pi-\theta_0$, the above construction defines a Legendrian loop. So we have obtained a (smooth if one ignores the corners) path of Legendrian loops connecting $J_{\text{thick}}$ and $J_{\text{thin}}$. See \autoref{fig:foliation1} for an illustration of this isotopy. 

The key point of the type I Legendrian isotopy is that the Legendrian loops in the isotopy from $J_{\text{thick}}$ to $J_{\text{thin}}$ are pairwise disjoint by construction. Moreover, notice that $\p_z$ is a contact vector field transverse to $D_{\ot}$ in $\RR^3_{\ot}$. Let $\psi_z^s$ be the time-$s$ flow of $\p_z$. Then it follows from the construction that $\psi_z^s (J_t) \cap \psi_z^{s'} (J_{t'}) = \emptyset$ if either $s \neq s'$ or $t \neq t'$.

\begin{figure}[ht]
	\begin{overpic}[scale=.35]{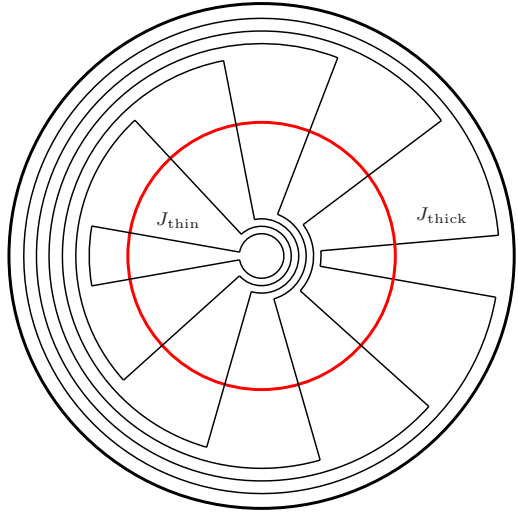}
		\put(80,57){\tiny{$J_{\text{thick}}$}}
		\put(29,56){\tiny{$J_{\text{thin}}$}}
	\end{overpic}
	\caption{Type I Legendrian isotopy from $J_{\text{thick}}$ to $J_{\text{thin}}$.}
	\label{fig:foliation1}
\end{figure}

\begin{remark} \label{rmk:corner_rounding}
	Note that the Legendrian knots $J_{\text{thick}}$ and $J_{\text{thin}}$ constructed above are only piecewise smooth. But there is a canonical way to approximate any piecewise smooth Legendrian knot by a smooth one by rounding the corners. Here canonical means that the Legendrian isotopy class of the resulting smooth Legendrian is independent of the choice of the approximation. Moreover the Thurston-Bennequin number of a piecewise smooth Legendrian knot is well-defined since the transverse Legendrian push-off exists. The same remark applies as well to the following section.
\end{remark}

\subsection{Construction of type II Legendrian isotopy} \label{subsec:type_II}
Now we turn to the construction of the type II Legendrian isotopy. The construction of type II isotopy is technically more involved than the type I isotopy in two aspects: firstly, the Legendrian loops in the isotopy are no longer pairwise disjoint which is the price we pay for making the isotopy positive, and secondly, we need to further shrink $J_{\text{thin}}$ all the way to a point with controlled rate of convergence for later applications. We continue to use the notations from the previous section.

Let $C''_{s,\theta}$ be the Legendrian curve whose projection to $D_{\ot}$ coincides with $C_s$ and such that $z_{\theta}=0$. As before, fix small $\theta_0>0$. Then there exists $a_0$ smaller than but sufficiently close to $\pi$ such that the holonomy along the Legendrian arc $A_0 = \{ \theta_0 \leq \theta \leq 2\pi \} \subset C''_{a_0,\theta=0}$ increases the $z$-coordinate by a positive $\delta \ll \epsilon$. Similarly there exists small $b_0>0$ such that the holonomy along the Legendrian arc $B_0 = \{ 0 \leq \theta \leq \theta_0 \} \subset C''_{b_0,\theta=0}$ decreases the $z$-coordinate by $\delta$. To complete the construction of $J_{\text{thick}}$, let us take Legendrian segments in the radial directions $R_0 = \{ \theta=0, b_0 \leq r \leq a_0 \}$ and $R_{\theta_0} = \{ \theta=\theta_0, b_0 \leq r \leq a_0 \}$ on the planes $\{z=0\}$ and $\{z=-\delta\}$, respectively. Define $J_{\text{thick}} = A_0 \cup B_0 \cup R_0 \cup R_{\theta_0}$.

In order to construct $J_{\text{thin}}$ as well as the isotopy, pick $0 < \nu < \mu \ll \theta_0$ and reparametrize the intervals by $\mu_t \in [2\pi,2\pi+\mu]$ and $\nu_t \in [\theta_0,2\pi+\nu]$, where $t \in [0,1]$, such that $\mu_0=2\pi, \mu_1=2\pi+\mu, \nu_0=\theta_0, \nu_1=2\pi+\nu$ and $\nu_t < \mu_t, \dot{\nu}_t > \dot{\mu}_t$ for all $t$. Define $A_t = \{ \nu_t \leq \theta \leq \mu_t \} \subset C''_{a_0,\theta=\mu_t}$. Let $\delta_t$ be the difference in the $z$-coordinate at the endpoints of $A_t$. There exists $b_t>0$, strictly decreasing in $t$, such that if $B_t = \{ \mu_t-2\pi \leq \theta \leq \nu_t \} \subset C''_{b_t,\theta=\mu_t}$ denotes the Legendrian segment, then the holonomy along $B_t$ decreases the $z$-coordinate by $\delta_t$. Taking the truncated Legendrian rays $R_{\mu_t} = \{ \theta=\mu_t, b_t \leq r \leq a_0 \}$ and $R_{\nu_t} = \{ \theta=\nu_t, b_t \leq r \leq a_0 \}$ on the planes $\{ z=0 \}$ and $\{ z=-\delta_t \}$, we can define $J_{\text{thin}} = A_1 \cup B_1 \cup R_{\mu_1} \cup R_{\nu_1}$ and the Legendrian isotopy $J_t = A_t \cup B_t \cup R_{\mu_t} \cup R_{\nu_t}$ for $t \in [0,1]$. See \autoref{fig:foliation2} for an illustration of the type II isotopy.

\begin{figure}[ht]
	\begin{overpic}[scale=.35]{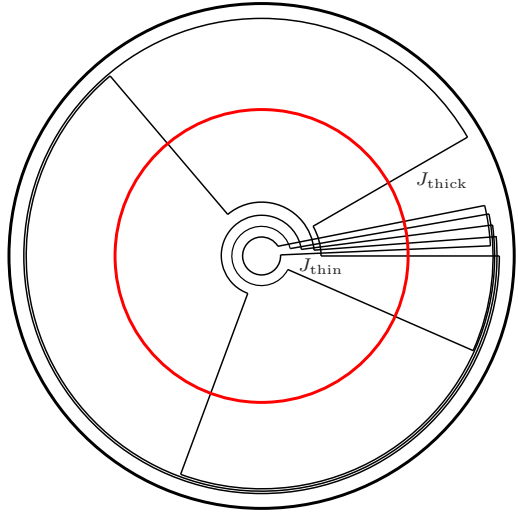}
		\put(80,64){\tiny{$J_{\text{thick}}$}}
		\put(57,47){\tiny{$J_{\text{thin}}$}}
	\end{overpic}
	\caption{Type II Legendrian isotopy from $J_{\text{thick}}$ to $J_{\text{thin}}$.}
	\label{fig:foliation2}
\end{figure}

Finally we want to further isotop $J_{\text{thin}}$ to a point with controlled rate of convergence as follows. Note that $J_{\text{thin}}$ bounds a disk $D_{\text{thin}} \subset D_{\ot} \times (-\epsilon,\epsilon)$ which is foliated by Legendrian arcs, i.e., the characteristic foliation on $D_{\text{thin}}$ has precisely two half-elliptic singularities on $J_{\text{thin}}$, and moreover, the projection of $D_{\text{thin}}$ to $D_{\ot}$ coincides with the obvious disk bounded by the projection of $J_{\text{thin}}$ to $D_{\ot}$. Now a standard neighborhood $D_{\text{thin}} \times (-\epsilon',\epsilon')$ of $D_{\text{thin}}$ determined by the characteristic foliation is a Darboux ball in which the Legendrian loops $J_t$, $t$ sufficiently close to $1$, can be drawn in the front projection as in \autoref{fig:up_isotopy}(a). Here the corners are rounded as in \autoref{subsec:type_I} (cf. Remark \ref{rmk:corner_rounding}). Then it is clear from the front projection that the Legendrian isotopy near $J_{\text{thin}}$ can be extended by a positive Legendrian isotopy $J_t, t \in [1,2)$, such that $J_1=J_{\text{thin}}$ and $\displaystyle{\lim_{t \to 2} J_t}$ is a single point. Moreover, we can arrange so that the trace
	\begin{equation*}
		\bigcup_{t=1}^2 ( J_t \times \{t\} ) \subset D_{\ot} \times (-\epsilon',\epsilon') \times \RR \subset D_{\ot} \times (-\epsilon',\epsilon') \times T^\ast \RR,
	\end{equation*}
defines the front projection of a smooth Legendrian disk in a 5-dimensional Darboux chart. See \autoref{fig:up_isotopy}(b).

\begin{figure}[ht]
	\begin{overpic}[scale=.4]{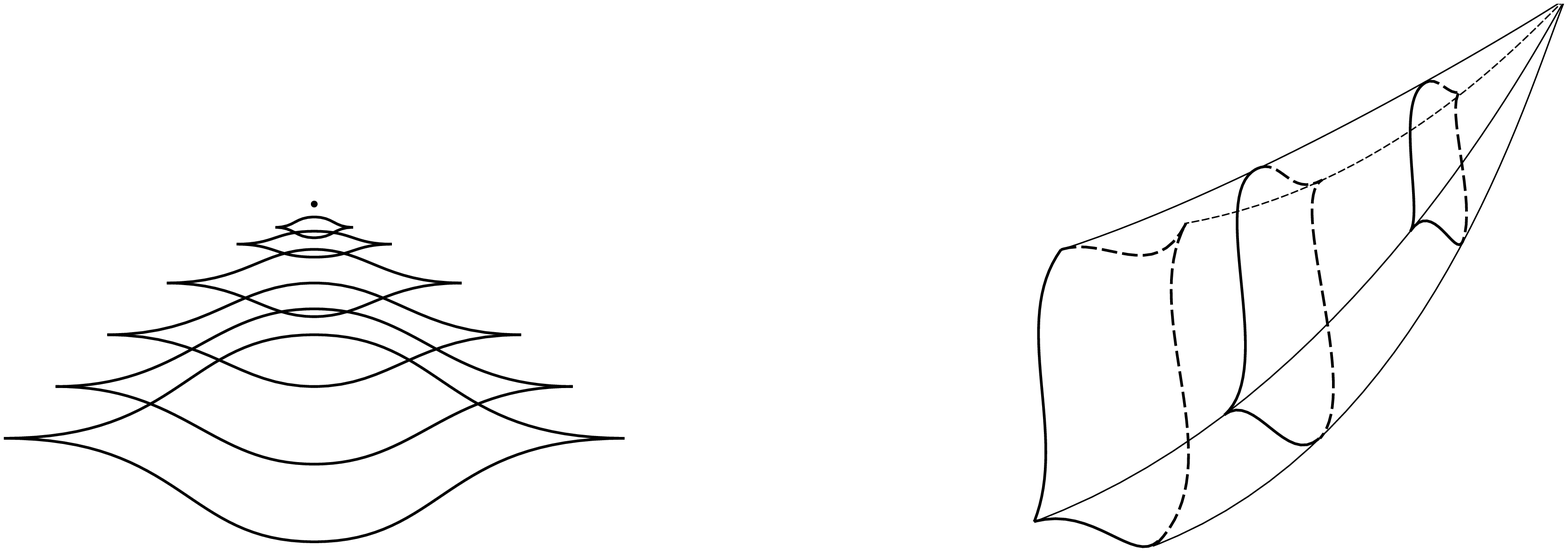}
		\put(40,6){$J_{t<1}$}
		\put(33.5,13){$J_1=J_{\text{thin}}$}
		\put(25,19){$J_{t>1}$}
		\put(18.3,-4){(a)}
		\put(82,-4){(b)}
	\end{overpic}
	\vspace{5mm}
	\caption{(a) The extended type II Legendrian isotopy from $J_{\text{thin}}$ to a point. (b) The front projection of the trace of the extended type II Legendrian isotopy.}
	\label{fig:up_isotopy}
\end{figure}

The key point for the (extended) type II isotopy is that it is a positive isotopy in the sense that $\alpha_{\ot} (\dot{J}_t)>0$ for all $t$, where $\alpha_{\ot}$ is as defined in (\ref{eqn:alpha_ot}). This is essentially because $\dot{J}_t$ can be decomposed into a component tangent to $\xi_{\ot}$ and a component in the positive direction of $\p_{\theta}$, which is positive with respect to $\alpha_{\ot}$. Moreover for any two distinct $t,t' \in [0,1]$, $J_t$ intersects (transversely) $J_{t'}$ in at most one point, and at $p = J_t \cap J_{t'}$ if exists, we have $\alpha_{\ot} (\dot{J}_t (p)) \neq \alpha_{\ot} (\dot{J}_{t'} (p))$. In fact if $t>t'$, then $\alpha_{\ot} (\dot{J}_t (p)) > \alpha_{\ot} (\dot{J}_{t'} (p))$. This is essentially because the Legendrian rays $R_{\nu_t}$ rotate faster than $R_{\mu_{t'}}$ by construction for the Legendrian isotopy from $J_{\text{thick}}$ to $J_{\text{thin}}$. For the extended isotopy from $J_{\text{thin}}$ to a point, this amounts to asserting that in the front projection in \autoref{fig:up_isotopy}(a), the $z$-coordinate of the lower branch\footnote{The front projection of the standard Legendrian unknot can be decomposed into the lower branch and upper branch by removing the two cusp points.} increases faster than that of the upper branch. These observations will be crucial in our construction of the loose unknot in a neighborhood of a general plastikstufe.

\section{plastikstufe implies overtwistedness in dimension 5} \label{sec:PS_to_OT}
In this section we prove that the germ of contact structure near a plastikstufe in dimension 5 is overtwisted. Together with the $h$-principle from \cite{BEM15}, this proves \autoref{thm:PS_criterion_new} in dimension 5.

Recall from \autoref{subsec:PS} that we have the standard overtwisted disk $D_{\ot} \subset (\RR^3_{\ot}, \xi_{\ot})$ equipped with the (overtwisted) contact form $\alpha_{\ot} = \cos rdz+r\sin rd\theta$. Consider the cotangent bundle $T^\ast S^1$ equipped with the standard Liouville form $pdq$ where $q \in \RR/\ZZ \cong S^1$ is the coordinate on the base and $p$ is the coordinate on the fiber. Then we form the 5-dimensional contact manifold $\RR^3_{\ot} \times T^\ast S^1$ with contact form $\alpha = \alpha_{\ot}-pdq$. The plastikstufe in dimension 5 is modeled on $D_{\ot} \times S^1 \subset \RR^3_{\ot} \times T^\ast S^1$ as a coisotropic submanifold. Write $P_{S^1} = D_{\ot} \times S^1$ and let $N_{\epsilon} (P_{S^1}) = \{ |z|<\epsilon, |p|<\epsilon \}$ be a small neighborhood of $P_{S^1}$. Then it suffices to show $(N_{\epsilon} (P_{S^1}), \alpha)$ is overtwisted.


It is convenient to think of $\RR^3_{\ot} \times T^\ast S^1$ as a trivial fibration over $S^1$ with fiber $\RR^3_{\ot}$ times the cotangent direction. We begin with some preparations concerning the 3-dimensional overtwisted fiber $\RR^3_{\ot}$. Let $K = \p D_{\ot}$ be the Legendrian unknot with $\tb(K)=0$. Pick a point $x \in K$ and let $K_0$ be a ``small'' (positive) stabilization of $K$ supported in a neighborhood of $x$ as shown in \autoref{fig:stabilization}(a). Note that the stabilizing loop in $K_0$ produces a kink in the front projection as depicted in \autoref{fig:kink}. In particular, by ``small'' we mean the action of the unique Reeb chord in the stabilization is less than $a$, for some small $0< a \ll \epsilon$ to be determined later. Equivalently, it also means the area (measured against $d\alpha_{\ot}$) of the disk enclosed by the stabilizing loop (after the projection to $D_{\ot}$) is smaller than $a$.

\begin{figure}[ht]
	\begin{overpic}[scale=.5]{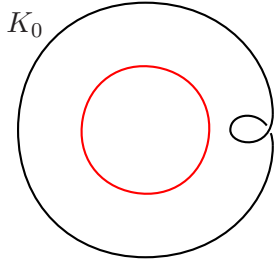}
		\put(-4,88){$K_0$}
	\end{overpic}
	\caption{A (positive) stabilization of $\p D_{\ot}$.}
	\label{fig:stabilization}
\end{figure}

Let $\gamma$ be the Legendrian ray emanating from the origin and passing through $x \in K$. Let $N(\gamma)$ be a standard tubular neighborhood of $\gamma$ in $D_{\ot} \times (-\epsilon,\epsilon)$ of radius greater than $a$. Observe that $\gamma$ intersects $K_0$ in a unique point $y$. Now pulling $y$ back along $\gamma$ to a point $y'$ sufficiently close to $0$ induces an ambient contact isotopy $\phi_t: D_{\ot} \times (-\epsilon,\epsilon) \to D_{\ot} \times (-\epsilon,\epsilon), t \in [0,1],$ which is supported in $N(\gamma)$. Here $\phi_0 = \text{id}$ and $\phi_1(y)=y'$. Then define $K_t = \phi_t(K_0), t \in [0,1]$, to be the Legendrian isotopy, supported in $N(\gamma)$. See \autoref{fig:unwind}. In particular, the projection of $K_1$ to $D_{\ot}$ is embedded and intersects the dividing set in two points. In addition, by further shrinking the stabilizing loop of $K_0$ if necessary, we can assume $\alpha_{\ot} (\dot{K}_t)$ is sufficiently small with respect to $\epsilon$. Here $\dot{K}_t$ means the time-derivative of the Legendrian isotopy.

\begin{figure}[ht]
	\begin{overpic}[scale=.5]{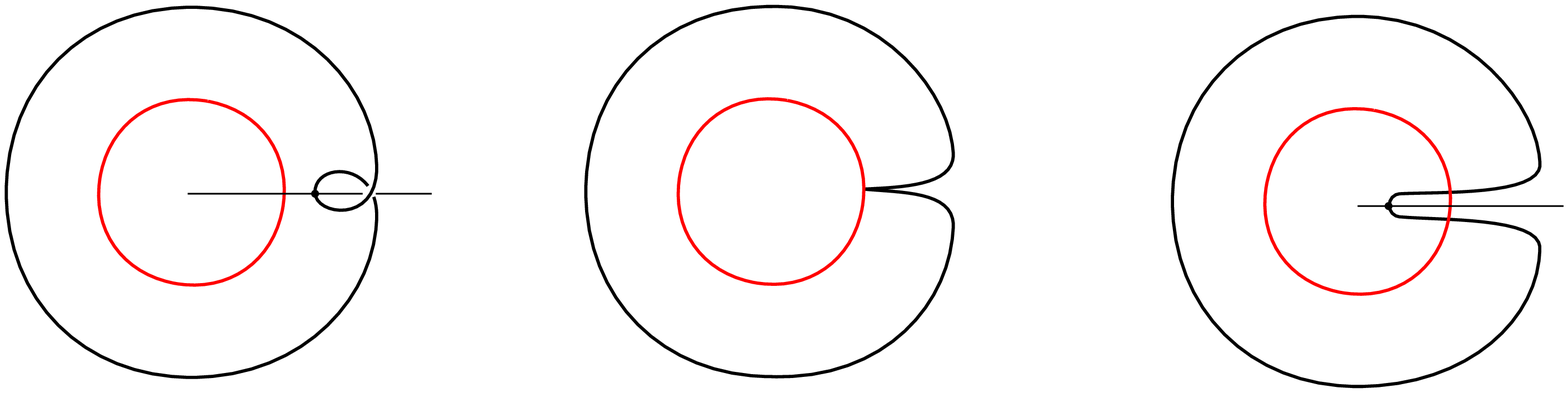}
		\put(26,14){$\gamma$}
		\put(18.4,13.5){\small{$y$}}
		\put(87.5,13){\small{$y'$}}
		\put(-1,22){$K_0$}
		\put(35,22){$K_{\frac{1}{2}}$}
		\put(74.5,22){$K_1$}
	\end{overpic}
	\caption{The projection to $D_{\ot}$ of a Legendrian isotopy.}
	\label{fig:unwind}
\end{figure}

According to the discussions in \autoref{subsec:type_I}, we may identify $K_1$ with $J_{\text{thick}}$, up to corner rounding (cf. Remark \autoref{rmk:corner_rounding}). Then there exists a type I Legendrian isotopy $K_t, t \in [1,2]$, supported in the given neighborhood of $D_{\ot}$ from $K_1$ to an arbitrarily ``thin'' Legendrian unknot $K_2=J_{\text{thin}}$ (cf. \autoref{fig:foliation1}).

With the above preparations, we are now ready to prove \autoref{thm:PS_criterion_new} in dimension 5. Fix a universal covering map $\RR \to \RR/\ZZ \cong S^1$. Slightly abusing notations, let $q$ be the coordinate on $\RR$ and $p$ be the dual coordinate on $T^\ast \RR$. First consider a neighborhood of $D_{\ot} \times [-1/10,1/10] \subset D_{\ot} \times S^1$ in $N_{\epsilon} (P_{S^1})$, inside of which we construct a Legendrian annulus $L_0 = K_0 \times [-1/10,1/10]$, which is Legendrian because $p=0$. Then by choosing $a$ sufficiently small, $L_0$ contains a loose chart.

Next, we consider the neighborhood of $D_{\ot} \times ([-1/5,-1/10] \cup [1/10,1/5])$ in $N_{\epsilon} (P_{S^1})$. Here we can construct the disjoint union of two annuli
	\begin{equation*}
		\widetilde{L}_1 := \bigcup_{\frac{1}{10} \leq |t| \leq \frac{1}{5}} (K_{10|t|-1} \times \{t\}).
	\end{equation*}
The Legendrian lift $L_1$ of $\widetilde{L}_1$, defined by $p = \alpha_{\ot} (\dot{K})$, can be assumed to be contained in $N_{\epsilon} (P_{S^1})$ by further shrinking $a$. To summarize, we fix $a$ small enough such that $L_0$ contains a loose chart and $L_1 \subset N_{\epsilon} (P_{S^1})$.

Observe that the projection of $L_0 \cup L_1$ to the base $S^1$ is an embedded interval $[-1/5,1/5] \subset S^1$. In order to close up $L_0 \cup L_1$ to get a Legendrian sphere, we use the type I Legendrian isotopy constructed in \autoref{subsec:type_I} as follows. Let $K_t, t \in [1,2]$, be the type I Legendrian isotopy supported in $D_{\ot} \times (-\epsilon,\epsilon)$ such that $K_1$ is identified with $J_{\text{thick}}$ and $K_2$ is identified with $J_{\text{thin}}$, up to corner rounding. Moreover, we can assume $J_{\text{thin}}$ is contained in a Darboux chart such that the front projection, as shown in \autoref{fig:thin}, contains a unique Reeb chord with action less than $\epsilon' \ll \epsilon$. Let us call $\epsilon'$ the action of $J_{\text{thin}}$.

\begin{figure}[ht]
	\begin{overpic}[scale=.4]{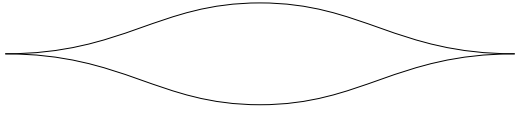}
		
	\end{overpic}
	\caption{The front projection of $J_{\text{thin}}$, after corner rounding.}
	\label{fig:thin}
\end{figure}

By compactness, there exists $N>0$ such that $\alpha_{\ot} (\dot{K}_t) < N$ for all $t \in [1,2]$. Choose $C > 0$ such that $N/(C-1) < \epsilon/2$. Consider the following annuli
	\begin{equation*}
		\widetilde{L}'_2 := \bigcup_{\frac{1}{5} \leq |t| \leq C} (K_{\frac{5t+5C-2}{5C-1}} \times \{t\}) \subset D_{\ot} \times (-\epsilon,\epsilon) \times \RR.
	\end{equation*}
The Legendrian lift of $\widetilde{L}'_2$, denoted by $L'_2$, is defined by $p=\alpha_{\ot} (\p_t K_{(5t+5C-2)/(5C-1)})$. Hence $L'_2$ is contained in $\epsilon$-neighborhood of $D_{\ot} \times \RR$. Abusing notations, we will also denote by $\widetilde{L}'_2$ and $L'_2$ their projections to $N_{\epsilon} (P_{S^1})$ modulo the $\ZZ$-action.

However, the Legendrian annuli $L'_2$ is not embedded in $N_{\epsilon} (P_{S^1})$ and the self-intersections occur precisely when $t \in \frac{1}{2}\ZZ \setminus \{0\}$. To remedy this, recall $\p_z$ is a contact vector field in $D_{\ot} \times (-\epsilon,\epsilon)$, and let $\phi_s$ be the time-$s$ flow of $\p_z$, which is defined near $D_{\ot} \times \{0\}$ for small $s$. Consider the following modification of $\widetilde{L}'_2$
\begin{equation} \label{eqn:lift_by_phi}
	\widetilde{L}_2 := \bigcup_{\frac{1}{5} \leq |t| \leq C} (\phi_{s(t)} (K_{\frac{5t+5C-2}{5C-1}}) \times \{t\}) \subset D_{\ot} \times (-\epsilon,\epsilon) \times \RR,
\end{equation}
where the function $s(t): [-C,-1/5] \cup [1/5,C] \to \RR$ is defined as follows. First for $t \in [1/5,C]$, we choose $s(t)$ such that $s(1/5)=0, s(C)=\epsilon/2$, $s$ is strictly increasing and $s'(1/5)=s'(C)=0$. Similarly for $t \in [-C,-1/5]$, choose $s(t)$ such that $s(-1/5)=0, s(-C)=\epsilon/3$, $s$ is strictly decreasing, $s'(-1/5)=s'(-C)=0$ and most importantly, $s(t) < s(-t)$ for all $t \in [-C,-1/5)$.

It is straightforward to check that the Legendrian lift of $\widetilde{L}_2$, denoted by $L_2$, still lies inside the $\epsilon$-neighborhood of $D_{\ot} \times \RR$. Moreover, the projection of $L_2$ to $N_{\epsilon} (P_{S^1})$ by modulo the $\ZZ$-action remains embedded by construction. Again, we abuse notation to call $L_2$ the resulting Legendrian annuli in $N_{\epsilon} (P_{S^1})$.

Finally to complete the construction of the loose Legendrian sphere, choose disjoint Darboux charts $U_{\text{right}} \supset \phi_{\epsilon/2} (K_2) \times [C]$ in $D_{\ot} \times (-\epsilon,\epsilon) \times [C]$ and $U_{\text{left}} \supset \phi_{\epsilon/3} (K_2) \times [-C]$ in $D_{\ot} \times (-\epsilon,\epsilon) \times [-C]$, respectively, where $[\pm C] \in S^1 \cong \RR/\ZZ$. This is possible as long as the difference in the $z$-coordinate of points in $K_2$ is less than $\epsilon/12$. In the following we only show how to cap-off the end at $\phi_{\epsilon/2} (K_2) \times [C]$ since the other end can be capped-off in the same way. Recall that in the above chosen Darboux chart $\phi_{\epsilon/2}(K_2)$ may be identified with $J_{\text{thin}}$ with action less than $\epsilon' \ll \epsilon$. Given $\epsilon'$ is small enough, one can cap-off $\phi_{\epsilon/2} (K_2) \times [C]$ by a standard Legendrian disk $Z_C$ in $U_{\text{right}} \times [C,C+1/5] \times (-\epsilon,\epsilon)$ whose front projection is shown in \autoref{fig:cap}. Here $[C,C+1/5]$ is regarded as an embedded interval in $S^1$ and $(-\epsilon,\epsilon)$ is a neighborhood of zero in the fiber of $T^\ast S^1$. In particular, the Legendrian disk $Z_C$ is constructed such that the projection to $D_{\ot}$ of its slice at every $q \in [C,C+1/5]$, if non-empty, is disjoint from any $K_t, t \in [0,2]$. Hence $Z_C$ is disjoint from $L_0 \cup L_1 \cup L_2$. Similarly one can construct the other capping disk $Z_{-C}$.

\begin{figure}[ht]
	\begin{overpic}[scale=.4]{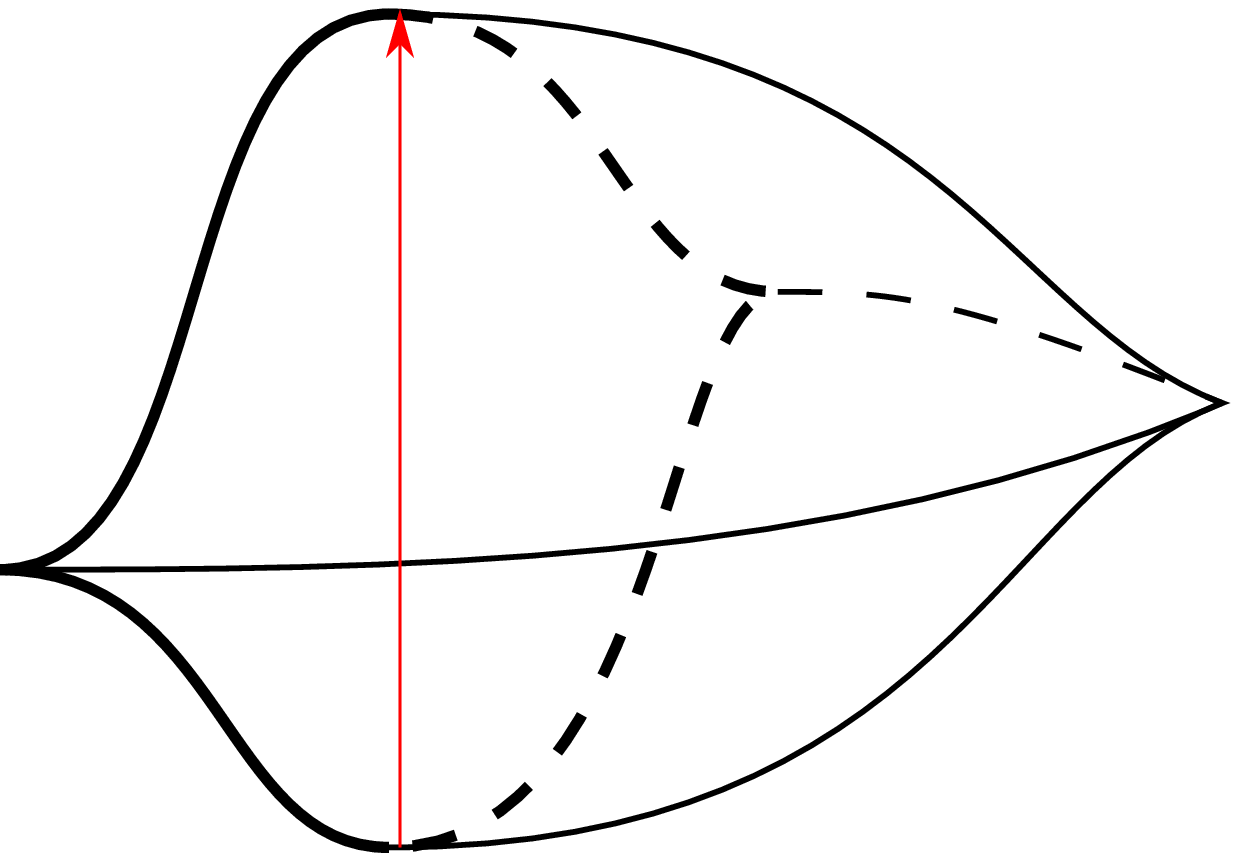}
		\put(-33,50){\small{$\phi_{\epsilon/2} (K_2) \times [C]$}}
		\put(72,63){\small{$Z_C$}}
		\put(36,35){\small{$<\epsilon'$}}
	\end{overpic}
	\caption{The front projection of the Legendrian capping disk $Z_{C}$}
	\label{fig:cap}
\end{figure}

Assembling all the pieces together, we have constructed a Legendrian sphere
	\begin{equation*}
		\Lambda := L_0 \cup L_1 \cup L_2 \cup Z_C \cup Z_{-C} \subset N_{\epsilon} (P_{S^1}),
	\end{equation*}
which is embedded by the above arguments.

To see that $\Lambda$ is loose, note that a loose chart exists in the $\epsilon$-neighborhood of $L_0$. Also observe that by construction the rest of the Legendrian sphere $L_1 \cup L_2 \cup Z_C \cup Z_{-C}$ does not intersect with the loose chart. To see that $L$ is also Legendrian isotopic to the standard Legendrian unknot, we first observe that $L_0 \cup L_1$ is induced by a Legendrian isotopy $K_{-10t-1}, t \in [-1/5,-1/10]$, followed by the constant isotopy $K_0$ for $t \in [-1/10,1/10]$, and then followed by the Legendrian isotopy $K_{10t-1}, t \in [1/10,1/5]$, which is the inverse to the first isotopy. It is therefore clear that $L_0 \cup L_1$ is Legendrian isotopic to the Legendrian cylinder $\text{Cyl}_{K_1} = K_1 \times [-1/5,1/5]$ induced by the constant isotopy $K_1$ for $t \in [-1/5,1/5]$, relative to the boundary. Hence we see that $\Lambda$ is Legendrian isotopic to
	\begin{equation*}
		\Lambda_1 := \text{Cyl}_{K_1} \cup L_2 \cup Z_C \cup Z_{-C} \subset N_{\epsilon} (P_{S^1}).
	\end{equation*}

Now for each $\tau \in [1,2]$, we can construct a Legendrian sphere $\Lambda_{\tau}$ by capping off the Legendrian cylinder $\text{Cyl}_{K_{\tau}} = K_{\tau} \times [-1/5,1/5]$ in the same way that $L_0 \cup L_1$ is capped off. More precisely, we define an isotopy of Legendrian spheres
	\begin{equation*}
		\Lambda_{\tau} := \text{Cyl}_{K_{\tau}} \cup L^{\tau}_2 \cup Z_{C_{\tau}} \cup Z_{-C_{\tau}},
	\end{equation*}
where $L^{\tau}_2$ is the Legendrian lift of
	\begin{equation*}
		\widetilde{L}^{\tau}_2 := \bigcup_{\frac{1}{5} \leq |t| \leq C_{\tau}} (\phi_{s_{\tau} (t)} (K_{\frac{(10-5\tau)t+5\tau C_{\tau} -2}{5C_{\tau} -1}}) \times \{t\}) \subset D_{\ot} \times (-\epsilon,\epsilon) \times \RR/\ZZ,
	\end{equation*}
and $Z_{C_{\pm \tau}}$ are the capping-off Legendrian disks analogous\footnote{Precisely, the difference between $Z_{C_{\pm \tau}}$ and $Z_{\pm C}$ is merely a translation in the $z$-direction.} to $Z_{\pm C}$.
Here $C_{\tau}$, as a function of $\tau$, is strictly decrasing such that $C_1=C, C_2=1/5$, and $s_{\tau} (t)$ is a function defined in the same way as $s(t)$ is defined right below \autoref{eqn:lift_by_phi} with $C$ replaced by $C_{\tau}$ and $\epsilon$ replaced by $\epsilon_{\tau}$ such that $\epsilon_{\tau}$ strictly decreases in $\tau$, $\epsilon_1=\epsilon$ and $\epsilon_2=0$. Therefore each $\Lambda_{\tau}, \tau \in [1,2]$, is an embedded Legendrian sphere, and $\Lambda$ is Legendrian isotopic to $\Lambda_2$.

It remains to observe that $\Lambda_2$ is Legendrian isotopic to the standard unknot. Indeed $\Lambda_2$ is nothing but the Legendrian cylinder $K_2 \times [-1/5,1/5]$ capped off by Legendrian disks $Z_{\pm 1/5}$ (cf. \autoref{fig:cap}). Hence $\Lambda_2$ is embedded in a Darboux chart in $D_{\ot} \times [-2/5,2/5]  \times (-\epsilon,\epsilon)^2 \subset N_{\epsilon} (P_{S^1})$. Then it is clear from the front projection that $\Lambda_2$ is the standard Legendrian unknot as desired. This completes the proof of \autoref{thm:PS_criterion_new} in dimension 5.

\section{(slit) blob implies overtwistedness in dimension 5} \label{sec:bLob_to_OT}
In this section we will generalize the argument in \autoref{sec:PS_to_OT} to give a proof of \autoref{thm:slit_bLob_criterion}, which immediately implies \autoref{thm:bLob_criterion}. The proof is divided into two steps.\\

\noindent
\textsc{Step 1.} \textit{Existence of a 2-dimensional overtwisted slice.}\\

The goal here is, roughly speaking, to find a 2-dimensional overtwisted slice in a slit bLob, cutting through the binding. More precisely, by a 2-dimensional overtwisted slice we mean a 2-disk $D$ transversely intersecting the binding in exactly one point, such that the characteristic foliation\footnote{The characteristic foliation on $D$ is the pointwise intersection between $TD$ and the contact hyperplanes.} on $D$ coincides with the characteristic foliation on a 2-dimensional overtwisted disk in contact 3-manifold (cf. \autoref{fig:OTdisk}). Such overtwisted slice exists in any plastikstufe simply by taking the product of the 2-dimensional overtwisted disk with a point in the binding. Note also that we do not need to assume the ambient contact manifold is 5-dimensional in this step.

Consider a slit bLob $\SB(\Sigma)$ associated with a compact manifold $\Sigma$. Write $\p \Sigma = S \sqcup S'$ such that $S$ is the (not necessarily connected) binding of the open book. In dimension 5, $S$ is a finite disjoint union of circles, so is $S'$. We first choose a local model for the contact structure in a tubular neighborhood $N(S)$ of $S$. Continuing using the notations from \autoref{defn:slit_bLob}, choose small $\delta>0$ and identify $N(S)$ with $S \times D_{\delta}$, where we think of
	\begin{equation*}
		D_{\delta} = \{ r \leq \delta \} \subset D_{\ot} \subset (\RR^3,\xi_{\ot}),
	\end{equation*}
as a small disk around the center of the overtwisted disk.

Pick a properly embedded arc $\gamma \subset \Sigma$ such that $\gamma$ transversely intersects $\p \Sigma$ and the two endpoints of $\gamma$ lie on $S$ and $S'$, respectively. Denote $\p_{\In} \gamma = \p \gamma \cap S$ and $\p_{\Out} \gamma = \p \gamma \cap S'$. 

Using notations from \autoref{subsec:PS}, let us write $\gamma^t = \gamma \times \{t\}$ for $t \in [0,1]$. By assumption $\gamma^0$ agrees with $\gamma^1$ when restricted to $\{ r \leq \delta \}$. Define the truncated arcs $\gamma^i_{\trun} = \gamma^i \cap \{ r \geq \delta \}, i=0,1$. Then we obtain a piecewise smooth isotropic loop
	\begin{equation*}
		\gamma^0_{\trun} \cup \gamma^1_{\trun} \cup (\p_{\Out} \gamma \times [0,1]),
	\end{equation*}
which bounds a ``cuspidal'' overtwisted disk $D^{\vee}_{\ot}$ in the sense that $\p D^{\vee}_{\ot}$ is Legendrian with a cusp singularity and the interior of $D^{\vee}_{\ot}$ is smooth with characteristic foliation coincides with the one on the 2-dimensional overtwisted disk. Here we implicitly round the corners at $\p_{\Out} \gamma \times \{0,1\}$ in the standard way (cf. Remark \autoref{rmk:corner_rounding}). Choose a 3-dimensional contact submanifold which contains $D^{\vee}_{\ot}$ in the interior. Then by either using Eliashberg's $h$-principle in \cite{Eli89} or examining the 3-dimensional contact germ near $D^{\vee}_{\ot}$, one can find a smooth overtwisted disk $D^{\circ}_{\ot}$ which is $C^0$-close to $D^{\vee}_{\ot}$. In fact, we may further assume without loss of generality that $D^{\circ}_{\ot}$ coincides with $D^{\vee}_{\ot}$ outside a small neighborhood of the cusp point $\gamma^0 \cap \{r=\delta\}$. See \autoref{fig:cusp}. We sometimes also write $D^{\circ}_{\ot}(\gamma)$ if we want to emphasize its dependence on $\gamma$. This is our substitute for the overtwisted disk slice in a plastikstufe.\\

\begin{figure}[ht]
	\begin{overpic}[scale=.4]{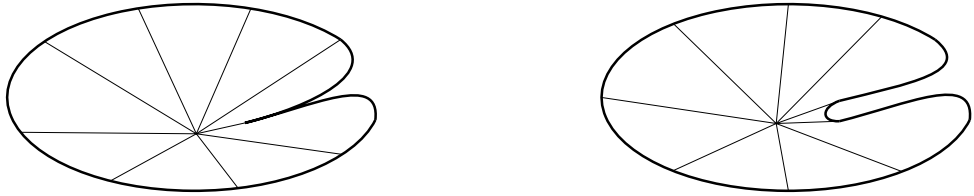}
		
	\end{overpic}
	\caption{The desingularization of the cuspidal overtwisted disk.}
	\label{fig:cusp}
\end{figure}

\noindent
\textsc{Notational remark.}
\textit{In order to make the following argument look parallel to the argument in \autoref{sec:PS_to_OT}, we reparametrize the $[0,1]$-direction of the open book by the angular variable $\theta \in [0,2\pi]$ such that page 0 is identified with $\{\theta=0\}$ and page 1 is identified with $\{\theta=2\pi\}$.\\}

Fix a point $u \in S$ and an arc $\gamma$ as above with $\p_{\In} \gamma=u$. Observe that the overtwisted disk $D^{\circ}_{\ot}$ is centered at $u$ in the sense that the characteristic foliation on $D^{\circ}_{\ot}$ has a unique elliptic singularity at $u$. Let $U \subset S$ be a small neighborhood of $u$. Then in the given contact neighborhood $N_{\epsilon} (\SB(\Sigma))$ of $\SB(\Sigma)$, there exists a contact embedding of a small neighborhood of $D^{\circ}_{\ot} \times U \subset \RR^3_{\ot} \times T^\ast U$, equipped with the product contact form, into $N_{\epsilon} (\SB(\Sigma))$. This is possible because $D^{\circ}_{\ot}$ is compact and thus the conformal factors are bounded. Just as in \autoref{sec:PS_to_OT}, we can construct a loose Legendrian annulus $S^1 \times U \subset N_{\epsilon} (\SB(\Sigma))$ such that for any $x \in \p U$, the Legendrian loop $S^1 \times x$ in the fiber $\RR^3_{\ot}$ can be identified with $J_{\text{thick}}$ (cf. \autoref{fig:stabilization}). More precisely, as discussed in \autoref{sec:Legendrian_isotopy}, it is uniquely determined by an arc $\gamma$ with $\p_{\In} \gamma=x$ and an angle between $0$ and $\pi$. We will denote this Legendrian loop by $\overline{\gamma}$ and the corresponding (small) angle by $\mangle \overline{\gamma} \in (0,\pi)$.

To summarize, we have so far found in any given neighborhood of a slit bLob $\SB(\Sigma)$ a 3-dimensional overtwisted slice cutting through the binding, and we use a neighborhood of the slice to construct a loose Legendrian annulus $S^1 \times U$ foliated by Legendrian loops in a neighborhood of $D_{\ot}$. Moreover, the boundary $S^1 \times \p U$ is the union of two copies of $J_{\text{thick}}$. So far the procedure is exactly the same as in the case of a plastikstufe.

Next we want to follow the strategy in \autoref{sec:PS_to_OT} to cap off the Legendrian annulus $S^1 \times U$ by wrapping $S^1 \times \p U$ around $S$ while performing the Legendrian isotopy from $J_{\text{thick}}$ to $J_{\text{thin}}$. This is, however, problematic because of the possibly non-trivial topology of the page $\Sigma$. The goal of the next step is to get around this problem when the page is 2-dimensional. At the moment it is not clear to the author whether this method can be generalized to higher dimensions or not.\\

\noindent
\textsc{Step 2.} \textit{Constructing loose Legendrian unknot assuming $\dim \Sigma=2$.}\\

From now on we assume $\dim \Sigma=2$, i.e., the page of the bLob is a compact surface. Pick a generic Morse function $f: \Sigma \to \RR$, satisfying the Morse-Smale condition, with only index 1 critical points such that $\p \Sigma = S \cup S'$ are regular level sets with $f(S)=0$ and $f(S')=1$. We need to take a closer look at the contact germ determined by $\SB(\Sigma)$. Namely, let $p_1, \cdots, p_m$ be the index 1 critical points and let $X_i \subset \Sigma$ be a collar neighborhood of the union the ascending and descending manifolds of $p_i$ for $i=1,\cdots,m$. It follows from the Morse-Smale condition\footnote{Namely, there exists no saddle-saddle connections. In our 2-dimensional case, the Morse-Smale condition is easy to achieve. But see, for example, \cite{Kup63,Sma63} for more general situations.} that $\p X_i \cap \p \Sigma = A_i \cup D_i$ for all $i$, where $A_i$ (resp. $D_i$) is a neighborhood in $S$ (resp. $S'$) of the intersection points between the ascending (resp. descending) manifold of $p_i$ with $S$ (resp. $S'$). We will call $\p X_i \cap \p \Sigma$ the horizontal boundary of $X_i$, and the complement the vertical boundary. Without loss of generality, we can assume that the vertical boundary of $X_i$ is tangent to the gradient trajectories. See \autoref{fig:saddle}. We will look at $\Sigma \setminus \bigcup_{i=1}^m X_i$ and $\bigcup_{i=1}^m X_i$ separately.

\begin{figure}[ht]
	\begin{overpic}[scale=.25]{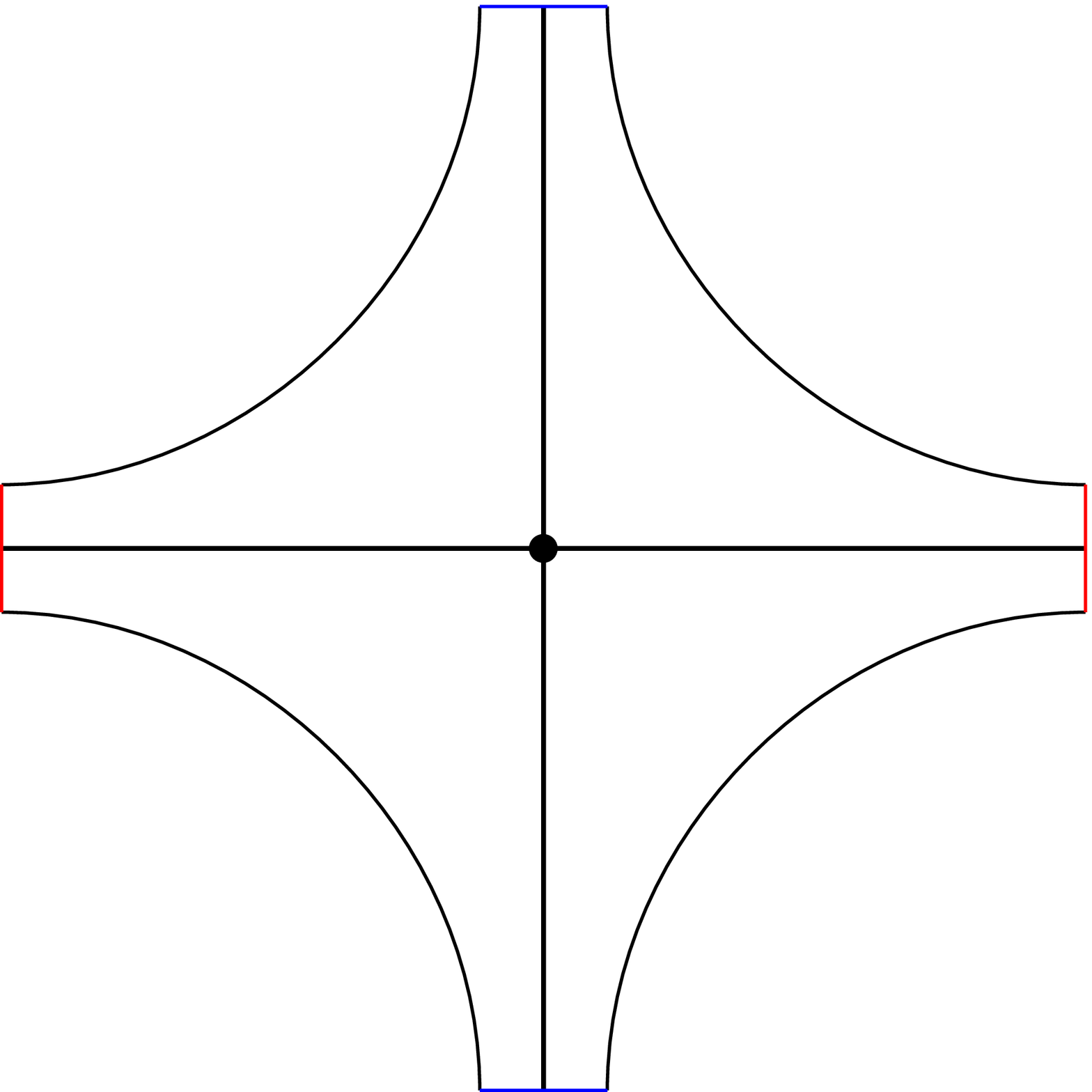}
		\put(52,45){$p_i$}
		\put(-10,48){$A_i$}
		\put(101,48){$A_i$}
		\put(46,-7){$D_i$}
		\put(46,102.5){$D_i$}
		\put(21,25){$\gamma_y$}
		\put(72,23){$\gamma_{z_1}$}
		\put(21,75){$\gamma_{z_3}$}
		\put(70,75){$\gamma_{z_2}$}
	\end{overpic}
	\caption{The standard embedding of $X_i$ into $\RR^2$.}
	\label{fig:saddle}
\end{figure}

The complement $\Sigma \setminus \bigcup_{i=1}^m X_i$ is clearly diffeomorphic to a finite disjoint union of rectangles, each of which is foliated by the gradient trajectories. Identify $\gamma$ with the gradient trajectories. Then each neighborhood $\gamma \times I \subset \Sigma$ of $\gamma$, where $I$ is an interval, determines a submanifold $\gamma \times I \times [0,2\pi]/\sim$ in $\SB(\Sigma)$, which has a neighborhood contactomorphic to a neighborhood of $D^{\circ}_{\ot}(\gamma) \times I \subset \RR^3_{\ot} \times T^\ast I$, equipped with the product contact form. This piece can be considered as a part of a plastikstufe by the inclusion $I \subset S^1$, where $S^1$ is the binding of the plastikstufe.

Now we turn to $\bigcup_{i=1}^m X_i$. Observe that the type I Legendrian isotopy in the fiber $\RR^3_{\ot}$ intersects neither the binding nor the Legendrian boundary of the open book. So we may slightly shrink $X_i$ near the horizontal boundaries such that $A_i$ and $D_i$ do not touch $\p \Sigma$. Abusing notations, we will still also it by $X_i$. For each $i$, it determines a submanifold $X_i \times [0,2\pi]/\sim$ in $\SB(\Sigma)$. In order to write down a contact form in coordinates, we embed $X_i \subset \RR^2_{q_1,q_2}$ such that the stable and unstable manifolds correspond to the $q_1$ and $q_2$-axis, respectively. See \autoref{fig:saddle}. Then the contact structure near $X_i \times [0,2\pi]/\sim$ can be identified with $T^\ast X_i \times [0,2\pi]_{\theta}$, equipped with the standard contact form $d\theta-p_1dq_1-p_2dq_2$.

Now we are in the position to construct the loose Legendrian unknot. Although we consider here only the low-dimensional case, we find it convenient to use a more general language, which may also serve as a warm-up for the next section in higher dimensions. Fix a complete Riemannian metric on $S$, which is a finite disjoint union of circles. Fix an arc $\gamma \subset \Sigma$ as above with $u = \p_{\In} \gamma \in S$ and a closed neighborhood $U \subset S$ of $u$. In the following we will denote by $\gamma_p$ the unique gradient trajectory from $S$ to $S'$ with $\p_{\In} \gamma_p = p \in S$. 

Choose one boundary point $x \in \p U$ (the other boundary point can be dealt with in the same way). Let $\mu_{u,x}$ be the oriented geodesic emanating from $u$ which passes through $x$. Then for some period of time at least, it induces a path of arcs $\gamma_{p_t}, 0 \leq t \leq \delta$, with $\p_{\In} \gamma_{p_0} = x$ and $\p_{\In} \gamma_{p_t} \in \mu_{u,x}$. Here, as before, each $\gamma_{p_t}$ coincides with an unbroken gradient trajectory. According to the above description of the contact structure, it also induces a path of overtwisted disks $D^{\circ}_{\ot} (\gamma_{p_t})$. So just as in the case of the plastikstufe, we can construct a Legendrian annulus by taking the Legendrian lift of the totality of the type I Legendrian isotopy from $\overline{\gamma}_x = \overline{\gamma}_{p_0}$ to $\overline{\gamma}_{p_{\delta}}$. Note also that the angle $\mangle \gamma_{p_t}$ increases as $t$ increases. By gluing this Legendrian annulus to the loose $S^1 \times U$ constructed in the end of Step 1, we obtain a ``longer'' Legendrian annulus which still contains a loose chart.

However, the (type I) Legendrian isotopy stops when the oriented geodesic $\mu_{u,x}$ hits $\p D_i \subset S$ for the first time for some $i$. Here we write $\p D_i \subset S$ with the original $D_i$ defined in the first paragraph in Step 2 in mind, instead of the one that shrinks into the interior of the page. We hope it will be clear from the context which version of $D_i$'s we use in the construction. Let $y \in \p D_i$ be the touching point and $\gamma_y$ be the corresponding gradient trajectory. By construction, $\gamma_y$ is contained in the vertical boundary of $X_i$. As discussed above, we shrink the horizontal boundary of $X_i$, as well as $\gamma_y$, slightly into the interior of the page and still call them $X_i$ and $\gamma_y$, respectively, by abusing notations. Let $X_i^\pm$ be the copies of $X_i$ on the pages corresponding to $\theta=\pm \mangle \gamma_y$, respectively. Then we construct a surface 
	\begin{equation} \label{eqn:bridge}
		\widetilde{T} = X_i^+ \cup X_i^- \cup (D_i \times \{ -\mangle \gamma_y \leq \theta \leq \mangle \gamma_y \}) \cup (A_i \times \{ \mangle \gamma_y \leq \theta \leq 2\pi-\mangle \gamma_y \}),
	\end{equation}
which has a canonical Legendrian lift $T$ in the standard contact neighborhood of $X_i \times [0,2\pi]/\sim$ described above. Topologically $T$ is nothing but a $S^2$ with four pairwise disjoint disks removed. Observe that $T$ can be glued to the previously constructed loose Legendrian annulus along $\overline{\gamma}_y$ to obtain a new Legendrian surface, say, $F$. Then $F$ has three new Legendrian boundary components $\overline{\gamma}_{z_i}, i=1,2,3$, which correspond to the three components $\gamma_{z_i}$ of the vertical boundary of $X_i$ other than $\gamma_y$. See \autoref{fig:saddle}. Once again the notation means $\p_{\In} \gamma_{z_i} = z_i \in S$ as usual. Now the geodesic flow on $S$ can be continued from $z_i, i=1,2,3$, separately. The above procedure can be repeated when the geodesic flow from some $z_i$ hits another (possibly the same) $X_j$ and so on. In this way we have constructed a Legendrian surface, still denoted by $F$, which is topologically a $S^2$ with finitely many pairwise disjoint disks removed. As in \autoref{sec:PS_to_OT}, the holes can be capped off in the standard way (cf. \autoref{fig:cap}) to obtain a Legendrian sphere $\overline{F} \cong S^2$ in the given neighborhood of $\SB(\Sigma)$ when all the boundary components $\gamma \subset \p F$ has sufficiently small angle $\mangle \gamma$ with respect to the given size of the neighborhood of $\SB(\Sigma)$. In other words, the capping-off operation can be done when all boundary components of $F$ can be identified with $J_{\text{thin}}$.

As in \autoref{sec:PS_to_OT}, the above constructed $\overline{F}$ may not be embedded in general. But this again causes no real problem as we are free to lift the type I Legendrian isotopy in the direction transverse to $D_{\ot}$ in the overtwisted fiber $\RR^3_{\ot}$.

By construction $\overline{F}$ is loose. We claim that $\overline{F}$ is also Legendrian isotopic to standard Legendrian unknot. The idea is essentially the same as in \autoref{sec:PS_to_OT}. Namely, we first undo the Legendrian isotopy from $K_0$ to $K_1$. Then we repeat the above construction with $K_1 \cong J_{\text{thick}}$, sitting in the fiber above $u \in S$, by the Legendrian loops in the type I isotopy from $K_1$ to $J_{\text{thin}}$. The only new ingredient here is to shrink the Legendrian sphere $\overline{F}$ back over $X_i$ for some $i$. This is most easily seen, when $\mangle{\gamma_y}$ is sufficiently small, in the front projection in a standard neighborhood of $X_i \times [0,2\pi]/\sim$ considered above. See \autoref{fig:bridge}. This finishes the proof of \autoref{thm:slit_bLob_criterion}, and hence \autoref{thm:bLob_criterion}.

\begin{figure}[ht]
	\begin{overpic}[scale=.3]{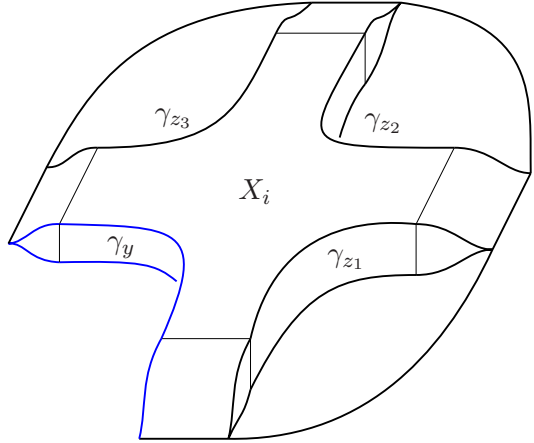}
		\put(19,36.5){$\gamma_y$}
		\put(61,34){$\gamma_{z_1}$}
		\put(68,60){$\gamma_{z_2}$}
		\put(28,61){$\gamma_{z_3}$}
		\put(44,46){$X_i$}
	\end{overpic}
	\caption{Half of the standard Legendrian unknot bounded by the union (in blue) of two copies of $\gamma_y$ and connecting arcs along the binding and the Legendrian boundary, respectively.}
	\label{fig:bridge}
\end{figure}

\section{plastikstufe implies overtwistedness in higher dimensions} \label{sec:PS_to_OT_general}

The ideas involved in proving \autoref{thm:PS_criterion_new} in higher dimensions are very similar to the 5-dimensional case explained in \autoref{sec:PS_to_OT}. There are, however, a few technical differences. First, we have to use the (extended) type II Legendrian isotopy introduced in \autoref{subsec:type_II} instead of the type I Legendrian isotopy in order to guarantee the (loose) Legendrian sphere constructed via ``wrapping around'' the binding is embedded. Second, extra care must be taken to make sure the ``wrapping'' procedure does not interfere with the loose chart. Here are the details.

In this case $S$ is a closed smooth manifold. Fix a complete Riemannian metric on $S$ and a point $u \in S$. We define the usual exponential map $\exp_u: T_u S \to S$ with respect to the metric. Let us denote by $B(r)$ the ball centered at 0 of radius $r$ in $T_u S$ and write $S(r) = \p B(r)$. Here the spheres $S(r)$ should not be confused with the binding $S$. By rescaling the metric if necessary, we may assume that the injective radius of $S$ is greater than 3. Then the restricted map $\exp_u|_{B(3)}: B(3) \to S$ is a smooth embedding. 

As in \autoref{sec:PS_to_OT}, we construct a loose Legendrian sphere, which turns out to be Legendrian isotopic to the standard unknot, in four stages.

Recall we have the trivial fiber bundle $\pi: N_{\epsilon} (P_S) \to S$ with fiber isomorphic to a neighborhood of $D_{\ot} \subset \RR^3_{\ot}$ times the cotangent fiber of $S$ of length less than $\epsilon$. In the following we will continue to use the notations from \autoref{sec:PS_to_OT}.

Firstly, upon $\exp_u (B(1)) \subset S$, we construct a Legendrian
	\begin{equation*}
		L_0 := K_0 \times \exp_u (B(1)).
	\end{equation*}
Here $\exp_u (B(1))$ is contained in the 0-section of $T^\ast S$. We choose the action $a$ of the stabilization in $K_0$ small enough such that $L_0$ contains a loose chart.

Secondly, upon $\exp_u (B(2) \setminus B(1)) \subset S$, we consider the submanifold
	\begin{equation*}
		\widetilde{L}_1 := \bigcup_{0 \leq t \leq 1} (K_t \times \exp_u (S(t+1))),
	\end{equation*}
which can be lifted to a Legendrian $L_1$ by setting $p = \alpha_{\ot} (\dot{K}_t)$, where $p$ is the coordinate on the fiber of $T^\ast S$. As in \autoref{sec:PS_to_OT}, we can choose $a$ sufficiently small such that $L_1 \subset N_\epsilon (P_S)$.

Thirdly, let $\phi_s$ be the time-$s$ flow of the contact vector field $\p_z$ in $D_{\ot} \times (-\epsilon,\epsilon)$. Then define
	\begin{equation*}
		\widetilde{L}_2 := \bigcup_{1 \leq t \leq 2} (\phi_{\frac{\epsilon}{2} (t-1)} (K_1) \times \exp_u (S(t+1))).
	\end{equation*}
This is well-defined since the maximal difference in the $z$-coordinates of points in $K_1$ is much less than $\epsilon$. Let $L_2$ be the Legendrian lift of $\widetilde{L}_2$ as before. Here $L_2$ is contained in $N_{\epsilon} (P_S)$ because $|\alpha_{\ot} (\p_t \phi_{\frac{\epsilon}{2}(t-1)} K_1)| < \epsilon$ for all $t \in [1,2]$.

Finally, we use the (extended) type II Legendrian isotopy from $K_1 \cong J_{\text{thick}}$ to $K_2 \cong J_{\text{thin}}$ and from $K_2 \cong J_{\text{thin}}$ to $K_3 = \{\text{pt}\}$ as constructed in \autoref{subsec:type_II}. Pick a large constant $C \gg 0$. Define
	\begin{equation*}
		\widetilde{L}_3 := \bigcup_{2 \leq t \leq C} (\phi_{\frac{\epsilon}{2}} (K_{\frac{2t+C-6}{C-2}}) \times \exp_u (S(t+1))),
	\end{equation*}
and let $L_3$ be the Legendrian lift of $\widetilde{L}_3$ in $N_{\epsilon} (P_S)$ which is well-defined as long as $C$ is sufficiently large. We claim that although $\widetilde{L}_3$ is typically not embedded, $L_3$ is a smooth embedded Legendrian submanifold in $N_\epsilon (P_S)$. Indeed, in the following, we rule out all the possibilities that $L_3$ may intersect itself, i.e., we show the self-intersections of $\widetilde{L}_3$ are removed by passing to the Legendrian lift.

\begin{enumerate}
	\item Let $v_1, v_2 \in T_u S$ be two vectors of length greater than 3 such that $\exp_u(v_1) = \exp_u(v_2) =:p$ and the Legendrian loops $K_{v_1}, K_{v_2} \subset D_{\ot} \times (-\epsilon,\epsilon)$ sitting above $p$ intersect non-trivially. For example, if $v_1, v_2$ have the same length, then $K_{v_1} = K_{v_2}$. In precise terms, we have $K_v := \phi_{\frac{\epsilon}{2}} (K_{\frac{2|v|+C-8}{C-2}})$. Now, since the corresponding geodesics passing through $\exp_u(v_1)$ and $\exp_u(v_2)$ are transverse to each other, the cotangent lift removes the self-intersection since $\alpha_{\ot} (\dot{K}) \neq 0$ everywhere on $K$.
	
	\item Suppose there is a closed geodesic $\gamma(t) = \exp_u (tv)$ emanating from $u$ in the direction of a unit vector $v$. Then by construction there exists (infinitely many) pairs $(t_0,t_1)$ such that the Legendrians $K_{t_0}, K_{t_1}$ intersect non-trivially in a point, say, $x$. Here $K_{t_i} := K_{t_i v}$. Then the fact $\alpha(\dot{K}_{t_0}(x)) \neq \alpha(\dot{K}_{t_1}(x))$ implies the cotangent lift removes the self-intersection. Hence after all, the Legendrian lift $L_3$ is embedded.
\end{enumerate}

Now we assemble the pieces together to define the Legendrian sphere $\Lambda = L_0 \cup L_1 \cup L_2 \cup L_3$ in $N_\epsilon (P_S)$. 

We claim that $\Lambda$ is loose. To see this, first recall that $L_0$ contains a loose chart. Then observe that $L_3$ does not interfere the loose chart because of the presence of $\phi_{\epsilon/2}$ in the definition of $L_3$ and the fact that the loose chart is contained in a $D_{\ot} \times (-2a,2a) \subset D_{\ot} \times (-\epsilon,\epsilon)$, where $a$ is the action of the stabilization and is much smaller than $\epsilon$. It remains to argue that $\Lambda$ is embedded. By the previous argument, it suffices to check that $L_3 \cap L_2 = \emptyset$. In fact, $\widetilde{L}_3$ intersects $\widetilde{L}_2$ non-trivially whenever, using the language from \autoref{subsec:type_II}, there is a slice $\phi_{\epsilon/2} (K_{t_0}) \times \{x\} \subset \widetilde{L}_3$ with $\nu_{t_0}=2\pi$ and $x \in \exp_u (B(3) \setminus B(2))$. The cotangent lifts, again, separate them since the derivative of the projection of $\phi_{\epsilon/2} (K_{t_0}) \times \{x\}$ to $D_{\ot}$, as $x$ moves in the geodesic direction, carries a nonzero $\p_{\theta}$-component, and $\alpha_{\ot} (\p_\theta) \neq 0$ on $0 < r < \pi$. This proves the claim.

The strategy of showing $\Lambda$ is Legendrian isotopic to the standard Legendrian unknot is identical to the one used in \autoref{sec:PS_to_OT}, and hence is omitted. This finishes the proof of \autoref{thm:PS_criterion_new}.

Lastly we give the proof of \autoref{cor:higher_dim} using the previous results.

\begin{proof}[Proof of \autoref{cor:higher_dim}]
In fact, we shall prove a stronger result by working with slit bLobs. Let $\SB(W \times F)$ be a slit bLob with page $W \times F$. In the following we will not distinguish between a slit bLob and the contact germ near the slit bLob.

Observe that $\SB(W \times F)$ may be identified with $\SB(F) \times T^\ast W$. It follows from \autoref{thm:bLob_criterion} that $\SB(F)$ is overtwisted. Hence by the $h$-principle from \cite{BEM15}, $\SB(F)$ contains a plastikstufe $P_{S^1}$. Therefore we see that $\SB(W \times F)$ also contains a plastikstufe $P_{W \times S^1}$, and hence is overtwisted by \autoref{thm:PS_criterion_new}.
\end{proof}

\section{Examples} \label{sec:example}

\subsection{Fibered connected sum}

The existence of plastikstufes in fibered connected sums along codimension 2 overtwisted submanifolds is well-known to the experts, see for example \cite{Pre07,CM16,Nie13}. Thanks to \autoref{thm:PS_criterion_new} we can now conclude that the resulting sum is overtwisted. Let us now detail how the construction of the plastikstufe works.

We begin by recalling the (contact) fibered connected sum operation following \cite[Section 7.4]{Gei08}. Let $(W,\xi)$ and $(W',\xi')$ be two (co-oriented) contact manifolds of dimension $\dim W' = \dim W-2$. Suppose there are disjoint contact embeddings $j_0,j_1: (W',\xi') \to (W,\xi)$ with trivial normal bundles, respectively.

Pick a trivialization $j_k(W') \times D^2_{2\epsilon} \subset W$ of the normal bundle of $j_k(W')$ for $k=0,1$, where $D^2_{2\epsilon}$ is the open disk of radius $2\epsilon$. By the standard contact neighborhood theorem, one can choose a contact form $\xi'=\ker\beta$ on $W'$ such that the following contact form
	\begin{equation*}
		\alpha_k = \beta+r^2 d\theta
	\end{equation*}
defines the restriction of $\xi$ to $j_k(W') \times D^2_{2\epsilon}$ for $k=0,1$, where $(r,\theta)$ is the standard polar coordinates on $D^2_{2\epsilon}$. 

Choose a function $f(r): (\epsilon/2,2\epsilon) \to \RR$ satisfying the following properties: 
	\begin{itemize}
		\item $f'(r)>0$,
		\item $f(r)=r^2$ on the interval $(\epsilon,2\epsilon)$,
		\item $f(r)=r^2-\epsilon^2/2$ on the interval $(\epsilon/2,\sqrt{3}\epsilon/2)$.
	\end{itemize}
Let $A_{\epsilon_0,\epsilon_1} = \{ \epsilon_0 \leq r \leq \epsilon_1 \} \subset D^2_{2\epsilon}$ denote the embedded annulus for any $0 < \epsilon_0 < \epsilon_1 <2\epsilon$. Then we can define a new contact structure on $j_k(W') \times A_{\epsilon/2,2\epsilon}$ by
	\begin{equation*}
		\widetilde{\alpha}_k = \beta + f(r)d\theta
	\end{equation*}
which agrees with $\alpha_k$ on $j_k(W') \times A_{\epsilon,2\epsilon}$.

Consider the diffeomorphism $\phi: j_0(W') \times A_{\epsilon/2,\sqrt{3}\epsilon/2} \to j_1(W') \times A_{\epsilon/2,\sqrt{3}\epsilon/2}$ defined by
	\begin{equation*}
		\phi(x,r,\theta) = (j_1 \circ j_0^{-1} (x), \sqrt{\epsilon^2-r^2}, -\theta).
	\end{equation*}
It is straightforward to check that $\phi^\ast (\widetilde{\alpha}_1) = \widetilde{\alpha}_0$. So we can construct the following manifold, known as the fibered connected sum,
	\begin{equation*}
		\#_\phi W = \big( W \setminus (j_0(W') \times D^2_{\epsilon/2}) \cup (j_1(W') \times D^2_{\epsilon/2}) \big)/\sim,
	\end{equation*}
where $(x,r,\theta) \sim \phi(x,r,\theta)$ for all $(x,r,\theta) \in j_0(W') \times A_{\epsilon/2,\sqrt{3}\epsilon/2}$. By the discussions above, the contact structure on $W$ naturally induces a contact structure on $\#_\phi W$.

We are now ready to state our first application of \autoref{thm:PS_criterion_new}.

\begin{theorem} \label{thm:OT_fibered_sum}
	Using the notations from above, if $(W',\xi')$ is overtwisted, then $\#_{\phi} W$ is also overtwisted. In other words, contact fibered connected sum along overtwisted contact submanifolds is overtwisted.
\end{theorem}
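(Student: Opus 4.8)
The plan is to exhibit an \emph{embedded plastikstufe} inside $\#_\phi W$; the statement then follows at once from \autoref{thm:PS_criterion_new}. The first step is to locate, inside the gluing region, the product piece where the fibered sum ``opens up''. On the slab $j_0(W')\times\{\epsilon/2<r<\sqrt3\epsilon/2\}$ the contact form of $\#_\phi W$ is $\widetilde\alpha_0=\beta+f(r)\,d\theta$ with $f(r)=r^2-\epsilon^2/2$. Since $f'>0$, the substitution $p:=-f(r)=\epsilon^2/2-r^2$, $q:=\theta$, is a diffeomorphism taking this slab onto $W'\times\{(q,p)\in T^\ast S^1 : |p|<\epsilon^2/4\}$, and there $\widetilde\alpha_0=\beta-p\,dq$. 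Although this slab sits inside the locus where $\phi$ is applied, it is nonetheless an honest open subset of the quotient $\#_\phi W$, carrying the contact form $\beta-p\,dq$ on $W'$ times a cotangent slab of $S^1$. The key feature is that $f$ changes sign in this range, so the full slab $\{|p|<\epsilon^2/4\}$ — in particular the zero section $\{p=0\}$ — is present.

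Next I would feed in the overtwistedness of $(W',\xi')$. By the $h$-principle of \cite{BEM15}, $(W',\xi')$ contains an embedded plastikstufe $P_{S'}=D_{\ot}\times S'$; by the uniqueness of the contact germ near a plastikstufe, this means there are an open set $\mathcal O\subset W'$, an open neighborhood $\mathcal V$ of $D_{\ot}\times S'$ in $(\RR^3\times T^\ast S',\ker(\alpha_{\ot}-\lambda_{S'}))$, and a contactomorphism $\Psi:\mathcal V\to\mathcal O$ (when $\dim W'=3$ one simply takes $S'$ a point and $\mathcal V$ a neighborhood of an overtwisted disk in $W'$). To sidestep bookkeeping with conformal factors I would use the freedom in the choice of $\beta$: the germ of $(W,\xi)$ along $j_k(W')$ depends only on $\xi'$ (and the normal trivialization), so in the normal form $\ker(\beta+r^2\,d\theta)$ one may use \emph{any} contact form $\beta$ representing $\xi'$. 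Thus, starting from a global contact form $\beta_0$ for $\xi'$, writing $\Psi_\ast(\alpha_{\ot}-\lambda_{S'})=h_0\beta_0$ on $\mathcal O$, extending $h_0$ to a positive function $h$ on $W'$, and replacing $\beta$ by $h\beta_0$, we may assume $\beta|_{\mathcal O}=\Psi_\ast(\alpha_{\ot}-\lambda_{S'})$.

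With this normalization, restricting the slab of the first step to $\mathcal O$ produces an open subset of $\#_\phi W$ that is contactomorphic, via $\Psi^{-1}\times\mathrm{id}$, to a neighborhood of $D_{\ot}\times S'\times S^1$ in $(\RR^3\times T^\ast S'\times T^\ast S^1,\ \alpha_{\ot}-\lambda_{S'}-p\,dq)=(\RR^3\times T^\ast(S'\times S^1),\ \alpha_{\ot}-\lambda_{S'\times S^1})$. In this model $D_{\ot}\times S'\times S^1=P_{S'\times S^1}$ is by definition a plastikstufe with core $S'\times S^1$, and it sits embedded in $\#_\phi W$. Applying \autoref{thm:PS_criterion_new} finishes the proof.

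The main obstacle I anticipate is the first step: verifying cleanly that the reparametrized slab $\{\epsilon/2<r<\sqrt3\epsilon/2\}$ truly descends to an open subset of the quotient $\#_\phi W$ carrying the product contact form $\beta-p\,dq$, and in particular that the sign change of $f$ places the zero section inside the resulting cotangent slab of $S^1$. Everything downstream of that is a routine unwinding of the plastikstufe neighborhood model, the only mild subtlety being the normalization of $\beta$ to absorb the conformal factor, handled as above.
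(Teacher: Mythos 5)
Your proposal is correct and follows essentially the same route as the paper: identify the region where $f$ vanishes (your zero section $\{p=0\}$ is precisely the paper's circle $S^1(\epsilon/\sqrt{2})$, where $f(\epsilon/\sqrt{2})=0$), recognize a contact embedding of a neighborhood of $W'\times S^1$ in $W'\times T^\ast S^1$, pull a plastikstufe out of $W'$ by the $h$-principle of \cite{BEM15}, take its product with $S^1$ to get a plastikstufe with core $S'\times S^1$, and finish with \autoref{thm:PS_criterion_new}. Your extra care about the explicit coordinate change $p=-f(r)$ and the normalization of $\beta$ to absorb conformal factors fills in details the paper's three-line proof leaves implicit, but it is the same argument.
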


\begin{proof}
	Let $S^1(\epsilon/\sqrt{2}) \subset D^2_{2\epsilon}$ be the circle defined by $\{r=\epsilon/\sqrt{2}\}$. Then there is a contact embedding of a neighborhood of $W' \times S^1(\epsilon/\sqrt{2}) \subset W' \times T^\ast S^1(\epsilon/\sqrt{2})$, viewed as the 0-section, in $\#_{\phi} W$. Since $W'$ is overtwisted by assumption, it follows from the $h$-principle in \cite{BEM15} that there exists an embedded plastikstufe inside $W' \times S^1(\epsilon/\sqrt{2})$. Hence $\#_{\phi} W$ is overtwisted by \autoref{thm:PS_criterion_new}.
\end{proof}

\begin{remark}
There are several known constructions of codimension 2 overtwisted contact embeddings with trivial normal bundle into a tight contact manifold. For example, any (overtwisted) contact 3-manifold $(M,\xi)$ admits a contact embedding into its unit cotangent bundle as a graph whose normal bundle is trivial if $c_1(\xi)=0$. In fact the condition $c_1(\xi)=0$ may be removed by the ambient surgery techniques as explained in \cite{CPS16}. Another source of examples come from Bourgeois' construction \cite{Bou02} of contact structures on $M \times T^2$, where $M$ is equipped with a contact form supported by an open book decomposition $(\Sigma,\phi)$. For the sake of simplicity, assume $\dim M=3$. Then a simple calculation shows that if the binding $B = \p\Sigma \subset M$ is not contractible, i.e., each component of $B$ represents a nontrivial element in $\pi_1(M)$, then the Reeb vector field on $M \times T^2$ has no contractible orbit. This implies that $M \times T^2$ is tight in the light of \cite{AH09}.
\end{remark}

\subsection{Open book with monodromy a negative power of the Dehn twist}
In the light of the Giroux correspondence \cite{Gir02}, any contact structure is supported by an open book decomposition, say, $(S,h)$ where the page $S$ is a Weinstein domain and the monodromy $h$ is an exact symplectomorphism. It would be very interesting to see if the overtwistedness (or equivalently, tightness) of the contact  structure can be read off from the pair $(S,h)$. A complete solution to this problem in dimension 3 is given by the following remarkable theorem due to Honda, Kazez and Mati\'c.

\begin{theorem}[\cite{HKM07}] \label{thm:right_veering}
	A contact 3-manifold $(M,\xi)$ is tight if and only if all of its supporting open book $(S,h)$ has right-veering monodromy $h \in Aut(S,\p S)$.
\end{theorem}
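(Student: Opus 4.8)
The plan is to reduce \autoref{thm:right_veering} to a single biconditional with no universal quantifier over open books. Since ``$\xi$ tight $\Rightarrow$ every supporting open book is right-veering'' is the contrapositive of ``some supporting open book is not right-veering $\Rightarrow$ $\xi$ overtwisted,'' and ``every supporting open book right-veering $\Rightarrow$ $\xi$ tight'' is the contrapositive of ``$\xi$ overtwisted $\Rightarrow$ some supporting open book is not right-veering,'' the theorem is equivalent to
\begin{equation*}
	(M,\xi)\ \text{is overtwisted} \iff \text{some open book } (S,h) \text{ supporting } \xi \text{ has non-right-veering } h .
\end{equation*}
Here $h\in\mathrm{Aut}(S,\p S)$ is right-veering if, for every properly embedded arc $\gamma$ with $\p\gamma\subset\p S$, the image $h(\gamma)$ — isotoped rel $\p\gamma$ to minimize geometric intersection with $\gamma$ — lies to the right of $\gamma$ at both endpoints, and it is non-right-veering if some such $\gamma$ has $h(\gamma)$ strictly to the left at an endpoint. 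As a soft preliminary I would record the lemma that a positive stabilization replaces $h$ by $h$ post-composed with a positive Dehn twist about a curve meeting the new co-core once, which can only push arcs further to the right and hence preserves right-veering; this is not logically needed for the reduction above, but it makes the two implications visibly consistent with the Giroux equivalence on open books supporting a fixed $\xi$.

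For the implication ``non-right-veering $\Rightarrow$ overtwisted'' I would start from an arc $\gamma$ with $h(\gamma)$ veering left at an endpoint and promote it to a \emph{sobering arc} in the sense of Goodman: after isotoping $h(\gamma)$ into minimal position and, if necessary, sliding $\gamma$ across bigons, one reaches an arc whose image meets it in a collection of bigons with coherent co-orientations. Realizing $S\times\{0\}$ and $S\times\{1/2\}$ as convex pages with Legendrian binding, making $\gamma$ Legendrian by the Legendrian realization principle, and tracking the dividing sets on these two convex surfaces, the left-veering of $h(\gamma)$ forces a boundary-parallel dividing arc — that is, a Legendrian unknot of Thurston--Bennequin invariant $\tb=0$ bounding an overtwisted disk $D_{\ot}$. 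Overtwistedness of $\xi$ then follows from the classical three-dimensional criterion, or in the language of this paper from \autoref{thm:PS_criterion_new} applied to $D_{\ot}$. This step — converting the purely combinatorial left-veering condition into a genuine overtwisted disk via convex surface theory and the sobering-arc mechanism — is where essentially all of the difficulty lies and is the main obstacle; in particular one must verify that the isotopies used to minimize $\gamma\cap h(\gamma)$ do not undo left-veering at the endpoint, which is precisely the delicate point in the arguments of Goodman and of Honda--Kazez--Mati\'c.

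For the implication ``overtwisted $\Rightarrow$ some supporting open book is not right-veering'' I would exhibit an open book with a visibly left-veering arc. One clean route: every overtwisted $(M,\xi)$ is supported by a \emph{negatively stabilized} open book — this follows from Eliashberg's classification of overtwisted contact structures together with a standard construction (positively stabilize any supporting open book sufficiently, then perform one negative stabilization near an overtwisted disk), and it is complementary to the negative-stabilization results discussed in \autoref{sec:example} — and a negative stabilization is never right-veering, since the negative Dehn twist it introduces carries the co-core arc of the new one-handle strictly to the left of itself at both endpoints. A more hands-on alternative, avoiding the appeal to the classification, is to isotope a fixed overtwisted disk $D$ so that it is transverse to the pages of a given supporting open book and meets one page $S$ in a properly embedded arc $\gamma$ crossing $D$; the first-return map restricted to a neighborhood of $D$ is then conjugate to a negative Dehn twist, so $h(\gamma)$ veers left along $\gamma$. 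Either way the local model near the overtwisted disk makes the conclusion immediate, so this direction is routine relative to the first.

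Finally I would assemble the two implications with the reduction of the first paragraph to obtain \autoref{thm:right_veering}. I expect the write-up to be dominated by the first implication: the combinatorial-to-geometric passage (non-right-veering arc $\leadsto$ sobering arc $\leadsto$ overtwisted disk) is the technical heart, while the reduction and the second implication are short once the conventions on ``right'' versus ``left'' and the local picture near an overtwisted disk are fixed.
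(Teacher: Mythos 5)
The paper does not prove this theorem; it is imported verbatim from \cite{HKM07}, so there is no in-paper proof to compare your argument against. (The closest the paper comes to the relevant technology is \autoref{sec:example}, where it recalls the bypass facts from \cite{Hon02,HKM05} and uses the ``bypass plus anti-bypass yields an overtwisted disk'' mechanism of \autoref{fig:bypass_misc}(b) to prove \autoref{thm:dehn_twist}.)

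On the substance: the initial reduction to the biconditional is fine, and the direction ``overtwisted $\Rightarrow$ some supporting open book is non-right-veering'' is sketched along the standard lines (negative stabilization plus Eliashberg's classification). The gap is in the other direction, which you correctly flag as the technical heart. You propose to take an arc $\gamma$ with $h(\gamma)$ veering left at one endpoint and \emph{promote it to a sobering arc} in Goodman's sense, then invoke Goodman's criterion. That promotion is not available in general: the sobering-arc condition is strictly stronger than non-right-veering. Being sobering requires, after putting $\gamma$ and $h(\gamma)$ into minimal position, a nonpositivity of the combined geometric, algebraic, and boundary intersection numbers, which in particular forces $h(\gamma)$ to veer left at \emph{both} endpoints and imposes sign constraints on every interior crossing; non-right-veering only guarantees a leftward veer at a \emph{single} endpoint and says nothing about interior intersections. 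Precisely because Goodman's criterion does not characterize overtwistedness, Honda--Kazez--Mati\'c had to replace it rather than reduce to it. Their actual argument Legendrian-realizes $\gamma$ on a convex page, uses the single left-veering endpoint to produce a bypass for $\p(S\times[0,1/2])$ attached from one side, observes that a trivial bypass along the same attaching arc always exists from the other side, and glues the two bypasses along their straight boundaries to obtain an overtwisted disk --- exactly the mechanism recalled in \autoref{fig:bypass_misc}(b) and deployed in the proof of \autoref{thm:dehn_twist}. Replacing your ``promote to a sobering arc'' step with this two-bypass construction is what is needed to close the gap.
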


In this case $S$ is a compact surface with boundary. While the precise definition of right-veering automorphisms of $S$ will not be needed here, we note that in the case $S = \DD^\ast S^1$, the disk cotangent bundle of $S^1$, any negative power of the Dehn twist $\tau^{-k}, k \geq 1$, is not right-veering. In particular it follows from \autoref{thm:right_veering} that the contact structure supported by $(\DD^\ast S^1,\tau^{-k})$ is overtwisted for all $k \geq 1$. The goal of this section is to generalize this fact to higher dimension.

\begin{theorem} \label{thm:dehn_twist}
	The contact structure supported by the open book $(\DD^\ast S^n, \tau^{-k})$ is overtwisted for all $k \geq 1$, where $\tau: \DD^\ast S^n \to \DD^\ast S^n$ is the (higher-dimensional) Dehn twist.
\end{theorem}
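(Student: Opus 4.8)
The plan is to produce, in the contact manifold $(M,\xi)$ supported by the open book $(\DD^\ast S^n,\tau^{-k})$, an embedded plastikstufe (for general $n$) or an embedded bLob (when $\dim M=5$), and then to invoke \autoref{thm:PS_criterion_new} or \autoref{thm:bLob_criterion}. For $k=1$ this is the negative stabilization case already covered by \cite{CMP15}: writing $\DD^\ast S^n=\DD^{2n}\cup H$ with $H$ a single Weinstein $n$-handle attached along the standard Legendrian $S^{n-1}$, the open book $(\DD^\ast S^n,\tau^{-1})$ is precisely the negative stabilization of $(\DD^{2n},\operatorname{id})=(S^{2n-1},\xi_{\std})$, the twist being along the vanishing cycle (the zero section). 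The point of the theorem is the uniform statement for all $k\ge 1$.

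The geometric input I would use is a Lagrangian disk with Legendrian boundary inside the page. Fix $q\in S^n$ and let $L=\DD^\ast_q S^n\subset\DD^\ast S^n$ be the cotangent disk-fibre; then $\p L\cong S^{n-1}$ is Legendrian in the binding $\p\DD^\ast S^n$. Since the Dehn twist is supported near the zero section, which $L$ meets transversally in the single point $q$, the disk $\tau^{-k}(L)$ agrees with $L$ near $\p L$ and is obtained from $L$ by dragging $q$ negatively $k$ times around a great circle through $q$. The \emph{key claim} --- the higher-dimensional manifestation of ``$\tau^{-k}$ is not right-veering relative to $L$'' --- is that $\tau^{-k}(L)$ may be isotoped, rel $\p L$ and through Lagrangian disks with Legendrian boundary, so that it meets $L$ only along $\p L$, with the isotopy moving strictly in the negative direction of the open-book angular coordinate. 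This is exactly where the sign of the twist is used, and it must be checked to persist for every $k$.

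Granting the claim, one assembles the overtwisted witness following \autoref{sec:bLob_to_OT}: sweeping $L$ once around the binding and closing it up using the above ``negative'' isotopy yields a cuspidal $2$-dimensional overtwisted slice $D^{\vee}_{\ot}$ cutting the binding, as in Step~1 of \autoref{sec:bLob_to_OT}; and, since a neighbourhood of the binding is modelled on $\p\DD^\ast S^n\times\DD^2$, the family of such slices parametrized by the co-sphere $\p L\cong S^{n-1}$ glues into an embedded plastikstufe $D_{\ot}\times S^{n-1}$ (for $n=2$, an embedded bLob), the self-intersections being removed by the $\p_z$-lifting trick of \autoref{sec:PS_to_OT}. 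The main obstacle is this assembly together with the ``negativity'' claim: one must transplant the explicit neighbourhood computations of \autoref{sec:PS_to_OT} and \autoref{sec:bLob_to_OT} into open-book coordinates and verify that the $k$-fold negative twist genuinely produces the Legendrian slices moving in the right direction --- the larger $k$ is, the more winding the disk $\tau^{-k}(L)$ carries, so both the construction and the embeddedness have to be seen to be robust under iterating the twist. For $n\ge 3$ an alternative endgame, should the resulting bLob split as a product, is to quote \autoref{cor:higher_dim} instead; but extracting the plastikstufe directly is the cleaner route.
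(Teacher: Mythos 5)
Your high-level strategy (produce an embedded plastikstufe with core $S^{n-1}$ and invoke \autoref{thm:PS_criterion_new}) matches the paper's, but your proposal has a genuine gap at the central step, and it is a gap you concede yourself. You take as building block the cotangent fibre $L=\DD^\ast_q S^n$ and formulate a ``key claim'' that $\tau^{-k}(L)$ can be isotoped rel $\p L$, through Lagrangian disks with Legendrian boundary, ``strictly in the negative direction'' of the open-book angle. This is exactly the content that has to be supplied, it is the analogue of a higher-dimensional right-veering statement for which there is no available theorem, and you explicitly flag that it ``must be checked to persist for every $k$'' without giving an argument. Even granting the claim, the assembly step is underspecified: $L$ is $n$-dimensional, so which $1$-dimensional slices of $L$ are being swept to produce the $2$-dimensional cuspidal disks, why those slices cut the binding as required, and why the resulting family is pairwise disjoint are all left open. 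These are precisely the places where the paper has to work.

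The paper avoids the unproven Lagrangian claim entirely by reducing to dimension $3$. It fixes the equator $S^{n-1}\subset S^n$ and, for each $p\in S^{n-1}$, the great circle $S^1_p$ through the poles and $p$; the restriction $(\DD^\ast S^1_p,\tau^{-k})$ is then an honest contact $3$-submanifold, and in it the overtwisted disk $D_{\ot}(p)$ is built from two explicit bypasses, one coming from the existence of the trivial bypass and the other from applying bypass rotation $k$ times to the image arc $\tau^k(\gamma_-)\cup\gamma_+$ --- this $k$-fold rotation is exactly what makes the argument uniform in $k\geq 1$. The disks are arranged to miss the traces of the poles, so that $\bigcup_{p\in S^{n-1}}D_{\ot}(p)$ is embedded and is a plastikstufe with core $S^{n-1}$. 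What your sketch is missing, concretely, is the reduction to a $3$-dimensional slice where the sign of $\tau^{-k}$ can be exploited by classical convex-surface/bypass technology; without it the ``negativity'' of the twist never gets converted into an actual overtwisted disk, and the rest of the construction cannot start.
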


\begin{remark}
	The special case of \autoref{thm:dehn_twist} when $k=1$ is proved in \cite{CMP15}. It is also possible to give an alternative proof of \autoref{thm:dehn_twist} using the technique of higher-dimensional bypass attachment developed by Honda and the author.
\end{remark}

Abusing notations, let us also denote by $\tau: \DD^\ast S^n \to \DD^\ast S^n$ the (positive) Dehn twist in any dimension. See, for example, \cite{Sei08} and the references therein for more details. From now on, all spheres $S^n$ are assumed to be the standard unit sphere in the Euclidean space. Equipping $\DD^\ast S^n$ with the standard symplectic structure, it turns out that $\tau$ is an exact symplectomorphism. Moreover, for any great circle $S^1 \subset S^n$, the restriction $\tau|_{\DD^\ast S^1}$ is precisely the classical 2-dimensional Dehn twist. In particular, we have a contact embedding
	\begin{equation*}
		(\DD^\ast S^1, (\tau|_{\DD^\ast S^1})^{-k}) \subset (\DD^\ast S^n, \tau^{-k})
	\end{equation*}
of the 3-dimensional overtwisted $(\DD^\ast S^1, (\tau|_{\DD^\ast S^1})^{-k})$.

In the case $n=2m+1$ is odd, the proof of \autoref{thm:dehn_twist} is simple. Namely, consider the Hopf fibration
	\begin{equation*}
		S^1 \to S^{2m+1} \to \mathbb{CP}^m
	\end{equation*}
where all fibers are great circles in $S^{2m+1}$. By previous discussions, there exists an (3-dimensional) overtwisted disk $D_{\ot} \subset (\DD^\ast S^1, (\tau|_{\DD^\ast S^1})^{-k})$ for each great circle $S^1$. Hence $D_{\ot} \times \mathbb{CP}^m$ is an embedded plastikstufe in $(\DD^\ast S^n, \tau^{-k})$ with core $\mathbb{CP}^m$. Therefore \autoref{thm:dehn_twist} follows from \autoref{thm:PS_criterion_new}.

The idea of the proof of \autoref{thm:dehn_twist} for general $n$ is similar but one needs to be more careful about constructing the 3-dimensional overtwisted disk $D_{\ot}$ as the Hopf fibration no longer exists necessarily.

We start by gathering the minimal amount of knowledge from the theory of bypasses in 3-dimensional contact topology for our later purposes. Readers are referred to \cite{Hon00,Hon02,HKM05} for more details.

\subsubsection{Bypasses in contact 3-manifolds}
Given contact 3-manifold $(M,\xi)$, an embedded surface $\Sigma$ is called {\em convex}, in the sense of Giroux \cite{Gir91}, if there exists a vector field $v$ on $M$, whose flow preserves $\xi$, such that $v$ is everywhere transverse to $\Sigma$. For a convex surface $\Sigma$, we define the dividing set $\Gamma_{\Sigma} = \{ x \in \Sigma ~|~ v_x \subset \xi_x \}$. It turns out that $\Gamma_{\Sigma}$ is a properly embedded 1-submanifold of $\Sigma$ and the isotopy class of $\Gamma_{\Sigma}$ is independent of the choice of $v$. When there is no risk of confusion, we often write $\Gamma$ instead of $\Gamma_{\Sigma}$ for the dividing set.

A bypass $B$ is a convex half disk modeled on $\{ z \in \CC ~|~ |z| \leq 1, \text{Im}(z) \geq 0 \}$ with Legendrian boundary such that the dividing set $\Gamma = \{ z \in B ~|~ |z|=1/2 \}$ is a half circle intersecting $\p B$ in two points. It will be convenient to call $\p B \cap \{ \text{Im}(z)=0 \}$ the {\em straight boundary} of $B$ and $\p B \cap \{ \text{Im}(z)>0 \}$ the {\em curved boundary} of $B$. A bypass can be attached to a convex surface $\Sigma$ along the straight boundary if we choose a Legendrian arc $\gamma$ intersecting $\Gamma_{\Sigma}$ in three points: two being $\p \gamma$ and the third in the interior of $\gamma$. Such Legendrian arcs $\gamma$ are called {\em admissible}. See \autoref{fig:bypass}.

\begin{figure}[ht]
	\begin{overpic}[scale=.3]{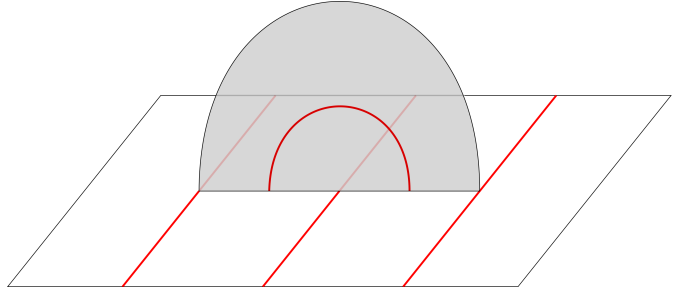}
		
	\end{overpic}
	\caption{A bypass attached to a convex surface.}
	\label{fig:bypass}
\end{figure}

Now given a bypass attached to a convex surface $\Sigma$, one can construct a contact structure on $\Sigma \times [0,1]$ such that both $\Sigma \times \{0\}$ and $\Sigma \times \{1\}$ are convex and $\Gamma_\Sigma = \Gamma_{\Sigma \times \{0\}}$. Roughly speaking, one can identify $\Sigma \times [0,1]$ with an appropriate neighborhood of $\Sigma \cup B$. It is also possible to describe $\Gamma_{\Sigma \times \{1\}}$ explicitly but since it is irrelevant for our purposes, we will leave it out.

Given an embedded convex surface $\Sigma \subset (M,\xi)$ and an admissible Legendrian arc $\gamma \subset \Sigma$, a bypass, if exists, can be attached to $\Sigma$ along $\gamma$ from either side of $\Sigma$. If $B$ is the bypass attached to $\Sigma$ along $\gamma$ from one side of $\Sigma$, then we denote by $\overline{B}$ the bypass attached from the other side along $\gamma$, and call it the {\em anti-bypass} of $B$.

The following facts will be crucial for us.
\begin{enumerate}
	\item Existence of trivial bypass \cite[Section 2.4]{Hon02}: Given a convex disk $D$ and an admissible Legendrian arc $\gamma$ as depicted in \autoref{fig:bypass_misc}(a), the bypass along $\gamma$ exists in an invariant neighborhood of $D$.

	\item Overtwisted disk: It follows almost from definition that a bypass and its anti-bypass can be glued along their straight boundaries to obtain an overtwisted disk. See \autoref{fig:bypass_misc}(b). As a corollary, if a convex surface $\Sigma$ divides $(M,\xi)$ into two connected components and a bypass along an admissible $\gamma \subset \Sigma$ exists in both components, then $(M,\xi)$ is overtwisted. This is, in fact, the motivating observation made in \cite{Hon02}.

	\item Bypass rotation \cite[Lemma 4.2]{HKM05}: Suppose we have a convex disk $D$ with dividing set equal to four arcs as depicted in \autoref{fig:bypass_misc}(c). If a bypass $B$ exists along the admissible $\gamma$, then there exists also a bypass $B'$ along $\gamma'$ obtained by ``left-rotating'' $\gamma$. See \autoref{fig:bypass_misc}(c). Moreover, the bypass $B'$ exists in a neighborhood of $D \cup B$.
\end{enumerate}

\begin{figure}[ht]
	\begin{overpic}[scale=.3]{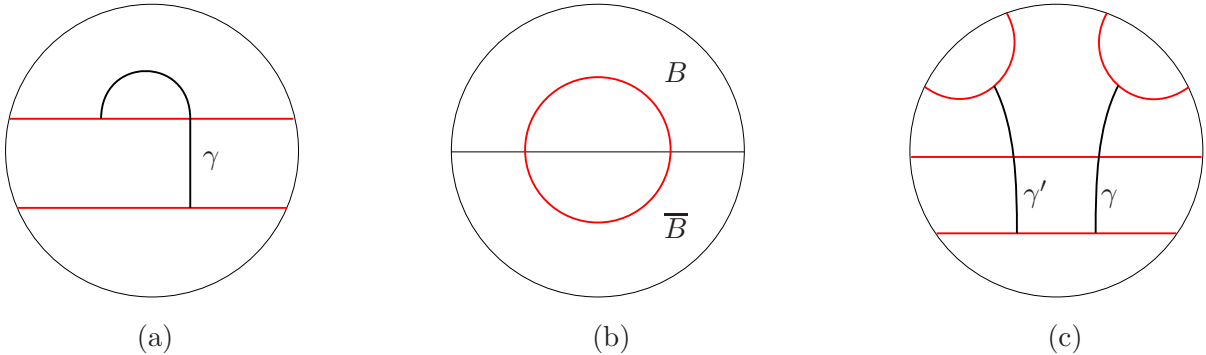}
		\put(11,-4){(a)}
		\put(49,-4){(b)}
		\put(87,-4){(c)}
		\put(16.5,11.2){$\gamma$}
		\put(55,18){$B$}
		\put(55,5){$\overline{B}$}
		\put(91.5,8){$\gamma$}
		\put(85,8){$\gamma'$}
	\end{overpic}
	\vspace{6mm}
	\caption{(a) The trivial bypass; (b) An overtwisted disk as the union of two bypasses; (c) Bypass rotation.}
	\label{fig:bypass_misc}
\end{figure}

\subsubsection{Proof of \autoref{thm:dehn_twist}}
With the above preparations, we are now ready to prove \autoref{thm:dehn_twist} in full generality. Recall we assume $S^n$ to be the unit sphere in $\RR^{n+1}$ with the north pole $\bdn$ and the south pole $\bds$. Let $S^{n-1} \subset S^n$ be the equator. Then for each $p \in S^{n-1}$, there is a unique great circle $S^1_p$ passing through $\bdn$ and $p$. In this way we cover $S^n$ by a $S^{n-1}$-family of great circles such that $S^1_p \cap S^1_q = \{ \bdn,\bds \}$ for any $p \neq \pm q \in S^{n-1}$. The same idea of using the family of great circles parametrized by the equator to constructed plastikstufe is employed in \cite[Theorem 5.3]{CMP15} to show that contact $(+1)$-surgery along loose Legendrian knots is overtwisted.

Fix a $p \in S^{n-1}$. Consider the 3-dimensional overtwisted contact submanifold $(\DD^\ast S^1_p, \tau^{-k}) \subset (\DD^\ast S^n, \tau^{-k})$. Here $\tau$ denotes the Dehn twist in appropriate dimensions. To avoid tedious notations, we will keep in mind the point $p$ but drop $p$ from our notations from now on. The next goal is to exhibit an explicit overtwisted disk in $(\DD^\ast S^1, \tau^{-k})$ using the technique of bypasses discussed previously. This strategy is borrowed from \cite{HKM07}.

First let us write $(\DD^\ast S^1, \tau^{-k}) = (\DD^\ast S^1 \times [0,1/2]) \cup_\phi (\DD^\ast S^1 \times [1/2,1])$ as the union of two solid tori, where the gluing map $\phi: \p (\DD^\ast S^1 \times [0,1/2]) \to \p (\DD^\ast S^1 \times [1/2,1])$ is defined by
	\begin{align*}
		\phi &= \tau^k: \DD^\ast S^1 \times \{0\} \to \DD^\ast S^1 \times \{1\}, \quad\text{and} \\
		\phi &= \text{id}: \DD^\ast S^1 \times \{1/2\} \to \DD^\ast S^1 \times \{1/2\}.
	\end{align*}
Here, as in the definition of open books, we also identify $\p (\DD^\ast S^1) \times \{t\} \sim \p (\DD^\ast S^1) \times \{t'\}$ for any $t,t' \in [0,1/2]$ or $t,t' \in [1/2,1]$, and simply call it $\p (\DD^\ast S^1)$. Then it is clear that both solid tori have convex boundary with dividing set $\Gamma = \p (\DD^\ast S^1)$.

Consider the solid torus $\DD^\ast S^1 \times [0,1/2]$. Let $\gamma \subset \p (\DD^\ast S^1 \times [0,1/2])$ be an admissible Legendrian arc contained in a neighborhood of $\DD^\ast_p S^1 \times [0,1/2]$ as depicted in \autoref{fig:two_bypass}(a). Then a bypass $B$ along $\gamma$ exists in a neighborhood of $\DD^\ast_p S^1 \times [0,1/2] \subset \DD^\ast S^1 \times [0,1/2]$. This follows essentially from the existence of the trivial bypass discussed above (cf. \cite[Fig.7]{HKM07}). It is convenient to write $\gamma = \gamma_+ \cup \gamma_-$ where $\gamma_+ = \gamma \cap (\DD^\ast S^1 \times \{1/2\})$ and $\gamma_- = \gamma \cap (\DD^\ast S^1 \times \{0\})$.

\begin{figure}[ht]
	\begin{overpic}[scale=.3]{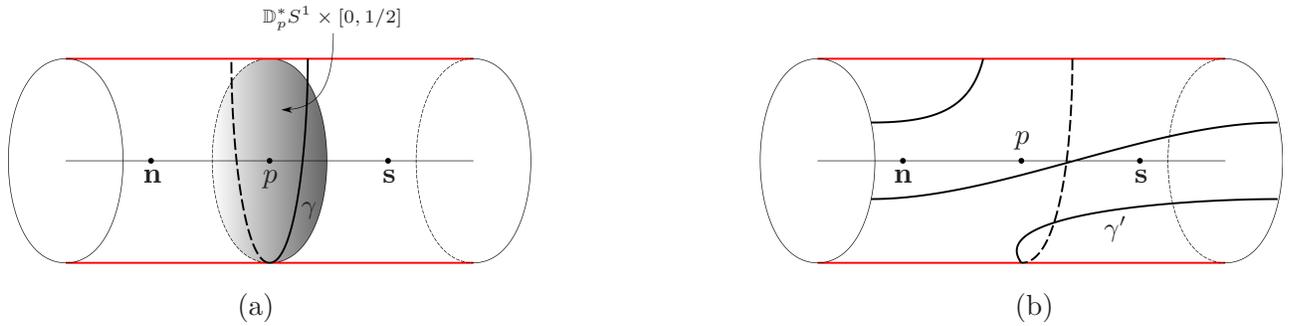}
		\put(10.6,6.4){$\bdn$}
		\put(20,6.4){$p$}
		\put(29.4,6.4){$\bds$}
		\put(20,19){\tiny{$\DD^\ast_p S^1 \times [0,1/2]$}}
		\put(23,4){\small{$\gamma$}}
		\put(69.5,6.4){$\bdn$}
		\put(88.4,6.4){$\bds$}
		\put(79,9.5){$p$}
		\put(86,2.2){\small{$\gamma'$}}
		\put(18,-4){(a)}
		\put(79,-4){(b)}
	\end{overpic}
	\vspace{5mm}
	\caption{(a) Existence of bypass along $\gamma$ in $\DD^\ast S^1 \times [0,1/2]$; (b) Existence of bypass along $\gamma'$ in $\DD^\ast S^1 \times [1/2,1]$.}
	\label{fig:two_bypass}
\end{figure}

Next we turn to the other solid torus $\DD^\ast S^1 \times [1/2,1]$. The corresponding admissible arc $\gamma' = \phi(\gamma)$ is as depicted in \autoref{fig:two_bypass}(b) (in the case $k=2$). Specifically, if we write $\gamma' = \gamma'_+ \cup \gamma'_-$ where $\gamma'_+ = \gamma' \cap (\DD^\ast S^1 \times \{1\})$ and $\gamma'_- = \gamma' \cap (\DD^\ast S^1 \times \{1/2\})$, then $\gamma'_+ = \tau^k (\gamma_-)$ and $\gamma'_- = \gamma_+$. By applying the bypass rotation $k$ times, one can show that a bypass $\overline{B}$ along $\gamma'$ exists in $\DD^\ast S^1 \times [1/2,1]$. 

As the notations suggest, the bypasses $B$ and $\overline{B}$ are anti-bypasses to each other. Hence $D_{\ot}(p) = B \cup \overline{B}$ is an overtwisted disk in $(\DD^\ast S^1_p, \tau^{-k})$. Moreover, it follows from the constructions that $D_{\ot}(p)$ is disjoint from the transverse knot/link\footnote{The connectivity depends on the parity of $k$.} $(\bdn \times [0,1]) \cup (\bds \times [0,1])$ and from $D_{\ot}(-p)$ in $(\DD^\ast S^1_p, \tau^{-k})$. Here we recall $S^1_p = S^1_{-p}$ by construction.

Finally, we take
	\begin{equation*}
		P_S = \bigcup_{p \in S^{n-1}} D_{\ot}(p),
	\end{equation*}
which is an embedded plastikstufe with core $S = S^{n-1}$. Hence \autoref{thm:dehn_twist} follows from \autoref{thm:PS_criterion_new}.

\bibliography{mybib}
\bibliographystyle{amsalpha}

\end{document}